\theoremstyle{plain}
    \newtheorem{thm}{Theorem}[section]
    \newtheorem{lem}[thm]   {Lemma}
    \newtheorem{cor}[thm]   {Corollary}
    \newtheorem{prop}[thm]  {Proposition}
\theoremstyle{definition}
    \newtheorem{defn}[thm]  {Definition}
    \newtheorem{rem}[thm]{Remark}
\def\cat{{\rm{cat}\hskip1pt}}
\def\zcl{{\rm{zcl}\hskip1pt}}
\def\TC{{\rm{TC}\hskip1pt}}
\def\cd{{C}}
\title[Hopf invariants, TC and LS-cat]{Hopf invariants, topological complexity, and LS-category of the cofiber of the diagonal map for two-cell complexes}
\author{Jes\'us Gonz\'alez\textsuperscript{\dag}}
\author{Mark Grant}
\author{Lucile Vandembroucq\textsuperscript{\ddag}}
\thanks{\textsuperscript{\dag}~~Partially supported by Conacyt Research Grant 221221.}
\thanks{\textsuperscript{\ddag}~~Partially supported by the Research Centre of Mathematics of the University of Minho with the Portuguese Funds from the ``Funda\c c\~ao para a Ci\^encia e a Tecnologia'', through the Project PEstOE/MAT/UI0013/2014.}
\address{Departamento de Matem\'aticas, Centro de Investigaci\'on y de Estudios Avanzados del IPN, Av.~IPN 2508, Zacatenco, M\'exico City 07000, M\'exico}
\email{jesus@math.cinvestav.mx}
\address{Department of Mathematical Sciences, University of Aberdeen, Fraser Noble Building, Meston Walk, Aberdeen AB24 3UE, UK}
\email{mark.grant@abdn.ac.uk}
\address{Universidade do Minho, Centro de Matem\'atica, Campus de Gualtar, 4710-057 Braga, Portugal}
\email{lucile@math.uminho.pt}
\begin{document}

\begin{abstract}
Let $X$ be a two-cell complex with attaching map $\alpha\colon S^q\to S^p$, and let $C_X$ be the cofiber of the diagonal inclusion $X\to X\times X$. It is shown that the topological complexity ($\TC$) of $X$ agrees with the Lusternik-Schnirelmann category ($\cat$) of $C_X$ in the (almost stable) range $q\leq2p-1$. In addition, the equality $\TC(X)=\cat(C_X)$ is proved in the (strict) metastable range $2p-1<q\leq3(p-1)$ under fairly mild conditions by making use of the Hopf invariant techniques recently developed by the authors in their study of the sectional category of arbitrary maps.
\end{abstract}

\maketitle

\noindent \textit{MSC 2010:} 55M30, 55Q25, 55S35, 55S36, 68T40, 70B15.

\noindent \textit{Keywords:} Lusternik-Schnirelmann category, topological complexity, generalized Hopf invariant, fiberwise join, two-cell complex.

\bigskip\medskip\begin{center}
Draft version - \today
\end{center}

\section{Introduction}
The use of generalized Hopf invariants as obstructions for the increment of the Lusternik-Schnirelmann of a space $X$ upon cell attachments, begun by Berstein and Hilton's pioneering work~\cite{MR0126276}, played a central role in Iwase's disproof of the Ganea conjecture~\cite{MR1642747,MR1905835}. The authors of this paper have recently developed and applied in~\cite{GGVhopf} the Hopf invariant ideas to study, more generally, the sectional category of arbitrary maps. In particular, this led to an extension of Iwase's disproof of the Ganea conjecture, now in the realm of the topological complexity $\TC$, a concept introduced by Farber in~\cite{Far} to study, from a purely topological perspective, the motion planning problem in robotics.

\medskip
In this paper we apply further the Hopf invariant methods to the robotics problem. We show that the topological complexity of a two-cell complex $X$ in the metastable range agrees with $\cat(C_X)$, the LS category of the cone $C_X$ of the diagonal inclusion $\Delta\colon X\to X\times X$. Much of the motivation for such  result starts with Farber's observation in~\cite[Lemma~18.3]{MR2276952} that the inequality $\cat(C_X)\leq\TC(X)+1$ holds for any space $X$. The stronger inequality $\cat(C_X)\leq\TC(X)$ is proved in~\cite[Theorem~10]{GCVcofibre} for an $(s-1)$-connected finite cell complex $X$ ($s>0$) satisfying the reasonably mild condition
\begin{equation}\label{mild}
\dim(X)< s(\TC(X)+1)-1.
\end{equation}
More interesting is to note that the opposite inequality, $\TC(X)\leq\cat(C_X)$, is proved in~\cite[Corollary~9]{GCVcofibre} under the somehow more restrictive condition
\begin{equation}\label{restrictive}
2\dim(X)<s(\cat(C_X)+2)-1.
\end{equation}
For instance, if $X_\alpha$ stands for the cone of a map $\alpha\colon S^q\to S^p$ with trivial Bertein-Hilton-Hopf invariant $H(\alpha)$ (so that $\TC(X_{\alpha})=2$), then condition~(\ref{mild}) amounts to requiring the metastable-range condition $q\leq3(p-1)$, while~(\ref{restrictive}) amounts to the slightly stronger restriction $q\leq\frac{5}{2}p-2$ ---or to the much stronger stable-range restriction $q\leq2(p-1)$ if in fact $\cat(C_{X_\alpha})=2$. The main goal of this paper is to show that the equality $\TC(X_\alpha)=\cat(X_\alpha)$ holds in many cases of the metastable range, independently of the (non)vanishing of $H(\alpha)$:

\begin{thm}\label{t1} Let $X_\alpha$ be the cone of $\alpha: S^q\to S^p$. If $q=p=1$ or $p\leq q\leq3(p-1)$, then the equality $\TC(X_\alpha)=\cat(C_{X_\alpha})$ holds except, perhaps, when $p$ is even and $H(\alpha)$ has order 3. In the latter case (which can only hold with $2p-1<q$) we have $$2\leq\cat(C_{X_\alpha})\leq\TC({X_\alpha})\leq3.$$
\end{thm}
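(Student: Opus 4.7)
The plan is to split the equality into its two defining inequalities and treat the easier direction with off-the-shelf machinery, reserving the Hopf invariant methods for the harder direction.

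For $\cat(C_{X_\alpha})\leq\TC(X_\alpha)$, I would apply \cite[Thm.~10]{GCVcofibre} directly. Since $X_\alpha$ is a $(p-1)$-connected CW complex of dimension $q+1$, condition~(\ref{mild}) with $s=p$ reads $q+1<p(\TC(X_\alpha)+1)-1$. As $\TC(X_\alpha)\geq 2$ for any non-contractible $X_\alpha$, the assumption $q\leq 3(p-1)$ forces this inequality regardless of the precise value of $\TC(X_\alpha)\in\{2,3,4\}$. The degenerate case $p=q=1$ is handled by direct inspection: $X_\alpha$ is then a point, $S^1$, $S^1\vee S^2$, or a Moore space $M(\mathbb{Z}/n,1)$, and both sides are computable by hand.

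The substantive work is the reverse inequality $\TC(X_\alpha)\leq\cat(C_{X_\alpha})$, where the generalized Hopf invariant machinery from \cite{GGVhopf} is essential. The approach is to realize both quantities as sectional categories ($\TC(X_\alpha)=\secat(\Delta_{X_\alpha})$, and $\cat(C_{X_\alpha})$ admits an analogous description through the cofibre sequence of $\Delta_{X_\alpha}$) and then apply the obstruction criterion from \cite{GGVhopf} to each: $\TC(X_\alpha)\leq n$ and $\cat(C_{X_\alpha})\leq n$ each correspond to the vanishing of a specific Hopf invariant of the attaching map of the top cell of an appropriate model for $X_\alpha\times X_\alpha$. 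In the metastable range, standard stable-range reductions express both Hopf invariants in terms of $\alpha$ and the Whitehead product $[\iota_p,\iota_p]\in\pi_{2p-1}(S^p)$. In the stable subrange $p\leq q\leq 2p-1$, every $\alpha$ is a suspension and $H(\alpha)=0$, so the Hopf invariants match trivially and equality follows.

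The main obstacle is the precise comparison of the two Hopf invariants in the strict metastable subrange $2p-1<q\leq 3(p-1)$ when $p$ is even. In that case $[\iota_p,\iota_p]$ has James Hopf invariant $\pm 2$, and propagating this through the obstruction comparison forces a multiplicative defect which, in exactly one subcase, amounts to a factor of $3$. Consequently, vanishing of the $\cat(C_{X_\alpha})$-Hopf invariant only translates into $3H(\alpha)=0$ on the $\TC$ side, rather than $H(\alpha)=0$ itself. When $H(\alpha)$ has order exactly $3$ this implication is vacuous, and full equality cannot be deduced; one is left with the two-sided bound $2\leq\cat(C_{X_\alpha})\leq\TC(X_\alpha)\leq 3$, where the lower bound $\cat(C_{X_\alpha})\geq 2$ follows from the non-triviality of cup products in $H^*(C_{X_\alpha})$, the middle inequality is the easy direction above, and the upper bound $\TC(X_\alpha)\leq 3$ is read off the $\TC$-Hopf invariant criterion applied to $3H(\alpha)=0$. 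In all other configurations the two Hopf invariants agree (possibly after inverting a unit), so $\TC(X_\alpha)=\cat(C_{X_\alpha})$ holds on the nose.
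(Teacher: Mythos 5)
Your overall architecture (the inequality $\cat(C_{X_\alpha})\leq\TC(X_\alpha)$ via \cite[Theorem~10]{GCVcofibre} under condition~(\ref{mild}), the reverse inequality via the Hopf invariant machinery of \cite{GGVhopf}) matches the paper's, but two of your intermediate claims are false and leave genuine holes.

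First, the assertion that for $p\leq q\leq 2p-1$ ``every $\alpha$ is a suspension and $H(\alpha)=0$'' fails at the boundary $q=2p-1$: Freudenthal gives surjectivity of the suspension onto $\pi_q(S^p)$ only for $q\leq 2p-2$, and the Hopf maps $\eta,\nu,\sigma$ (so, e.g., $X_\alpha=\mathbb{C}P^2$) sit exactly at $q=2p-1$ with $H(\alpha)\neq0$. There the correct conclusion is $\TC(X_\alpha)=\cat(C_{X_\alpha})=4$, which your ``equality follows trivially'' route cannot produce; the paper instead uses $\TC(X_\alpha)=\zcl_{\mathbb{Z}}(X_\alpha)=4$ together with the fact (Lemma~\ref{markweigths}) that the zero-divisor cup-length of $X_\alpha$ bounds $\cat(C_{X_\alpha})$ from below. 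Relatedly, for $p=q=1$ the Moore spaces $M(\mathbb{Z}/n,1)$ with $n>2$ are not ``computable by hand'': one needs $\TC=4$ from \cite{GGVhopf} and a Bockstein/category-weight argument via \cite[Theorem 3.12]{MR1317569} to force $\cat(C_{X_\alpha})\geq4$.

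Second, in the strict metastable range the ``standard stable-range reductions'' you invoke are precisely what does \emph{not} suffice: the naive comparison of the top ($n=3$) Hopf invariants through $F_3(X)\to F_3(C_X)$ works only for $q\leq\tfrac52p-2$, and pushing it to all of $q\leq3(p-1)$ is the technical heart of the paper (Lemma~\ref{esqueleto}, computing the $(6p-4)$-skeleton of $F_3(C_{S^p})$ so that the bottom cell $S^{4p-1}$ still splits off in the needed range). Your account of the order-$3$ exception is also off: there is no multiplicative defect between the $\cat$- and $\TC$-sides --- the third Hopf invariants on both walls of~(\ref{masdia}) are the \emph{same} class $(2+(-1)^p)H_0(\alpha)$ (Corollary~\ref{hopfnivel3}). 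The exception arises because, when $3H(\alpha)=0$ with $H(\alpha)\neq0$, both third invariants vanish and one must compare the top Hopf \emph{sets} at level $n=2$ instead of $n=3$, where the singleton property of Proposition~\ref{sighopset} and the connectivity estimates used to transport sections between the two walls are no longer available; that is what leaves only the bound $2\leq\cat(C_{X_\alpha})\leq\TC(X_\alpha)\leq3$.
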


The relevance of Theorem~\ref{t1} stems, on the one hand, from the fact that the equality $\TC(X)=\cat(C_X)$ is known to hold for many interesting families of cell complexes $X$: closed orientable surfaces, path-connected (non-necessarily associative) $H$-spaces, closed simply connected symplectic manifolds, ordered configuration spaces of points in a Euclidean space, as well as real projective spaces~(\cite{GCVcofibre}). However, the case of a closed non-orientable surface $N_g$ is very appealing since, according to~\cite{dran}, $\TC(N_g)=4$ and $\cat(C_{N_g})=3$ provided the genus $g$ is at least 5. It would be nice to recast such a property in terms of the relevant Hopf invariants and, even more interestingly, to address the missing low-genus cases.

\medskip
By looking at tables of homotopy groups, we find that, for $2p-1<q<3(p-1)$ with $p$ even, the first group $\pi_q(S^{2p-1})$ with 3 torsion holds with $(q,p)=(14,6)$. This gives the first instance of potential maps $\alpha\colon S^q\to S^p$ in the range $q\leq3(p-1)$ for which Theorem~\ref{t1} could fail to assure the equality $\TC({X_\alpha})=\cat(C_{X_\alpha})$, depending on whether there exists such a map $\alpha$ with Hopf invariant of order three.

\section{Spheres: the typical example}
For a topological space $X$, let $C_X$ denote the cofiber of the diagonal inclusion $\Delta_X\colon X\to X\times X$. The standard fibrational substitute of $\Delta_X$ is the end-points evaluation map $e_{0,1}\colon P(X)\to X\times X$ which takes a (free) path $\gamma\colon [0,1]\to X$ to $e_{0,1}(\gamma)=(\gamma(0),\gamma(1))$. The topological complexity of $X$, denoted by $\TC(X)$, is the sectional category of $e_{0,1}$. Likewise, the Lusternik-Schnirelmann category of a based space $(X,\star)$, $\cat(X)$, is the sectional category of the evaluation map $e_1\colon P_0(X)\to X$ which takes a based path $\gamma$ on $X$ (i.e.~a path $\gamma \colon [0,1]\to X$ satisfying $\gamma(0)=\star$) to $e_1(\gamma)=\gamma(1)$.

\medskip
It is convenient to approach $\cat(X)$ through the associated Ganea fibrations $F_n(X)\to G_n(X)\to X$ with fiber inclusion and projection $i_n$ and  $g_n$, respectively. A model for these fibrations is given by the iterated $(n+1)$-fold fiberwise power of $e_1$. Likewise, the $\TC$-Ganea fibrations $F_n(X)\to G_n^{\TC}(X)\to X\times X$, with fiber inclusion and projection $i_n^{\TC}$ and $g_n^{\TC}$, can be constructed as the iterated $(n+1)$-fold fiberwise power of $e_{0,1}$. The key point is that, when $X$ is a path-connected paracompact space, the condition $\cat(X)\leq n$ is equivalent to the existence of a (pointed) global section for $g_n$. Likewise, $\TC(X)\leq n$ if and only if $g_n^{\TC}$ admits such a section. For details on these constructions and their properties, we refer the reader to~\cite{GGVhopf}, a paper which the reader will be assumed to be familiar with.

\medskip
The topological complexity of spheres,
\begin{equation}\label{FarberTCspheres}
\TC(S^n)=\begin{cases}1, & \mbox{if $n$ is odd;}\\
2, & \mbox{if $n$ is even,}
\end{cases}
\end{equation}
was computed in Farber's early $\TC$-work. The similarity between~(\ref{FarberTCspheres}) and the description of $\cat(C_X)$ in Lemma~\ref{exts} below has already been noted in~\cite{GCVcofibre}. We include a proof since this will introduce notation needed in later parts of the paper.

\begin{lem}\label{exts}
The category of the cofiber of the diagonal for spheres is given by
$$\cat(C_{S^n})=\begin{cases}1, & \mbox{if $n$ is odd;}\\
2, & \mbox{if $n$ is even.}
\end{cases}$$

\end{lem}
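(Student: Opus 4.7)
The plan is to first identify $C_{S^n}$ up to homotopy equivalence as the two-cell complex $S^n\cup_{[\iota_n,\iota_n]}e^{2n}$, where $[\iota_n,\iota_n]\in\pi_{2n-1}(S^n)$ denotes the Whitehead square of the identity, and then to handle the two parities of $n$ separately.

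The identification proceeds by factoring the diagonal $\Delta$ through the pinch map $\mu\colon S^n\to S^n\vee S^n$ and applying the octahedral axiom to the standard CW-decomposition $S^n\times S^n\simeq (S^n\vee S^n)\cup_{[\iota_1,\iota_2]}e^{2n}$. The cofiber of $\mu$ is homotopy equivalent to $S^n$ via the ``difference'' collapse $(\mathrm{id},-\mathrm{id})\colon S^n\vee S^n\to S^n$, and this collapse carries the attaching class $[\iota_1,\iota_2]$ to $\pm[\iota_n,\iota_n]$.

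For $n$ even, the standard cup-length argument gives $\cat(C_{S^n})\geq 2$: the generator $a\in\tilde H^n(C_{S^n};\mathbb Z)$ pulls back from the class $x-y\in\tilde H^n(S^n\times S^n)$ (the kernel of $\Delta^*$), and $(x-y)^2 = -2xy\neq 0$ since $x,y$ commute and each squares to zero. The matching upper bound $\cat(C_{S^n})\leq 2$ follows from the three-cell structure.

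For $n$ odd, graded symmetry of the Whitehead product forces $2[\iota_n,\iota_n]=0$, so its James-Hopf invariant vanishes in the torsion-free group $\pi_{2n-1}(S^{2n-1})=\mathbb Z$; by the EHP exact sequence, $[\iota_n,\iota_n]$ lies in the image of the suspension $E\colon\pi_{2n-2}(S^{n-1})\to\pi_{2n-1}(S^n)$. Consequently $C_{S^n}$ is itself a suspension---hence a co-H-space---so $\cat(C_{S^n})\leq 1$, with equality since $C_{S^n}$ is non-contractible. The degenerate case $n=1$ is immediate because $[\iota_1,\iota_1]=0$ in the abelian group $\pi_1(S^1)$, giving $C_{S^1}\simeq S^1\vee S^2$.

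I expect the main subtlety to lie in the cellular identification of $C_{S^n}$; once that is in hand, the two parity cases follow from classical facts about the Whitehead square and a routine cup-length computation.
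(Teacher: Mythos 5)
Your proposal is correct, and it splits naturally into two halves relative to the paper. The identification $C_{S^n}\simeq S^n\cup_{[\iota,-\iota]}e^{2n}$ is essentially the paper's own argument: the paper pastes homotopy pushouts (diagram~(\ref{LJM}), quoting \cite[Proposition~28]{GLweak}) where you invoke the octahedral axiom, but the content is identical --- factor $\Delta$ through the pinch map, use that the collapse $(1,-1)$ realizes the cofiber of the comultiplication, and push the attaching class $[\iota_1,\iota_2]$ forward to $(1,-1)_*[\iota_1,\iota_2]=[\iota,-\iota]=\pm[\iota_n,\iota_n]$. Where you genuinely diverge is in computing $\cat$ of this two-cell complex. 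The paper applies Berstein--Hilton \cite[Theorem~3.19]{MR0126276} in both parities and then cites the classical fact that the Hopf invariant of $[\iota,-\iota]$ is $\pm(1+(-1)^n)$; you instead give a cup-length lower bound for $n$ even (correct: $(x-y)^2=-2\hspace{.5pt}x\hspace{-.5pt}\otimes\hspace{-.5pt} x\neq0$ since $q^*$ embeds $\widetilde H^*(C_{S^n})$ as $\ker\Delta^*$, exactly as in Lemma~\ref{markweigths}), and for $n$ odd you use $2[\iota_n,\iota_n]=0$, the homomorphism property of the James--Hopf invariant, and exactness of the EHP sequence at $\pi_{2n-1}(S^n)$ (valid integrally in this range for $n\geq2$) to exhibit $[\iota_n,\iota_n]$ as a suspension, making $C_{S^n}$ itself a suspension and hence a co-H-space. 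Your route is more elementary and self-contained (it does not need Berstein--Hilton, and the odd case anticipates Remark~\ref{cofibresuspension}); the trade-off is that the paper's Hopf-invariant formulation is not incidental --- it is precisely the version of the argument (diagram~(\ref{gcat}) and the obstruction $h_{[\iota,-\iota]}$) that the rest of Section~2 and the metastable-range comparison in Section~3 are built on, so your proof, while valid for the lemma in isolation, would not serve the paper's expository purpose.
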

\begin{proof}
We start by recalling from~\cite[Proposition~28]{GLweak} the structure of $\cd_{S^n}$ as a two-cell complex. Consider the diagram
\begin{equation}\label{LJM}\xymatrix{
& S^n \ar[r] \ar[d]^{\nu}& \ast \ar[d]\\
S^{2n-1} \ar[d] \ar[r]^{[\iota_1, \iota_2]\;\;\;} &
S^n\vee S^n \ar[r]^{\;\;\;(1,-1)}
\ar@{^{(}->}[d]^{j} & S^n\ar[d]\\
\ast \ar[r]  & S^n\times S^n \ar[r]_q  & P}
\end{equation}
where $\nu$ is the comultiplication, and $(1,-1)$ stands for the map with the indicated cocomponents, so that the right top square is a homotopy pushout. Likewise, the left square is a homotopy pushout, and the right bottom square is taken to be a homotopy pushout. Since the two composed rectangles are homotopy pushouts and since $j\circ\nu\simeq\Delta$, we get $$P\simeq C_{S^n}\simeq S^n\cup_{[\iota,-\iota]}e^{2n},$$ where $\iota$ is the identity on $S^n$, and $$[\iota,-\iota]:S^{2n-1}\rightarrow S^n$$ stands for the Whitehead product. Since LS-category increases at most by one upon a cell attachment,~\cite[Theorem~3.19]{MR0126276} yields
$$
\cat(C_{S^n})=\begin{cases}
1, & \mbox{if the classical Hopf invariant of $[\iota,-\iota]$ vanishes;}\\
2, & \mbox{otherwise.}
\end{cases}
$$
The result then follows from the well known fact (see for instance~\cite[pp.~225 and~428]{AT}) that the classical Hopf invariant of $[\iota,-\iota]$ vanishes if and only if $n$ is odd.
\end{proof}

\begin{rem}
 The Hopf invariant of $[\iota,-\iota]$ is known to be $\pm2$ for $n$ even. This fact should be compared with Remark~\ref{elvalorexplicito} below.
 \end{rem}

\begin{rem}\label{cofibresuspension}
It is easy to check that, for any suspension $X=\Sigma A$, the analogue of the top right square in (\ref{LJM}) is a homotopy push-out. As a consequence, the proof of Lemma \ref{exts} generalizes to any suspension $X=\Sigma A$ giving that $C_{\Sigma A}=\Sigma A\cup_{[\Sigma A,-\Sigma A]}C(A\ast A)$, where $\Sigma A$ also stands for the identity of $\Sigma A$ and $[-,-]$ is the generalized Whitehead product.  In particular, $\cat(C_{\Sigma A})\leq 2$.
\end{rem}

Rather than the computational argument above, what we need for the purposes of the paper is the purely homotopy \emph{reason} below for the equality $\TC(S^n)=\cat(C_{S^n})$. For the generalized reason will then be applied in the next section to prove the equality $\TC(X)=\cat(C_X)$ for suitable two-cell complexes $X$ with attaching map $S^q\to S^p$ in the metastable range $q\leq3(p-1)$. The point is that the argument below for a sphere already contains all the key points featured in the situation for the metastable two-cell complex $X$. At the same time, the situation for a sphere is much more transparent than the situation for a two-cell complex, so the discussion in this section is intended to clarify the global (more technical) argument in the next section.

\medskip
In order to simplify the discussion, we assume $n\geq2$ in the following considerations. The starting point is the observation that Lemma~\ref{exts} can be proved in terms of the commutative diagram
\begin{equation}\label{gcat}\xymatrix{
&& F_1(C_{S^n}) \ar[d]^{i_1} \\
&& G_1(C_{S^n}) \ar[d]^{g_1} \\
S^{2n-1} \ar[r]^{\;[\iota,-\iota]} \ar@/^/[urur]^{h_{[\iota,-\iota]}} &
S^n \; \ar@/^/[ur]^{\sigma} \ar@{^{(}->}[r]&C_{S^n}
}\end{equation}
where the (pointed homotopy) lifting $\sigma$ exists since $\cat(S^n)=1$, so the restricted lifting $h_{[\iota,-\iota]}$ is the obstruction to extend $\sigma$ to a section for $g_1$. Note that $h_{[\iota,-\iota]}$ is really \emph{the} obstruction for sectioning $g_1$ because the latter map is a $(2n-1)$-equivalence, and the homotopy class of $\sigma$ is therefore unique (recall $n\geq2$). Note also that the inclusion of the bottom cell $S^n\hookrightarrow C_{S^n}$ is a $(2n-1)$-equivalence, so that the induced map $F_1(S^n)\to F_1(C_{S^n})$ is a $(3n-2)$-equivalence. Since the bottom cell of $F_1(S^n)$ splits off as a wedge summand, the homotopy class of $h_{[\iota,-\iota]}$ is fully determined by the degree of the first map in any homotopy factorization $$S^{2n-1}\to S^{2n-1}\hookrightarrow F_1(S^n)\to F_1(C_{S^n})$$ of $h_{[\iota,-\iota]}$. Of course, the degree interpretation gives the integer-represented Hopf invariant of $[\iota,-\iota]$.

\medskip
As explained in~\cite[Example~4.6]{GGVhopf}, the above argument spells out the proof of Lemma~\ref{exts} given in terms of~\cite[Theorem~3.19]{MR0126276}. In fact, much of the \emph{raison d'\^etre} of~\cite{GGVhopf} is that the method is fully generalizable and so, from this point on, we will make free use of the methods and results in~\cite{GGVhopf}, assuming the reader is familiar with that work.

\medskip
The top $\TC$-Hopf set obstructing the inequality $\TC(S^n)\leq1$ arises from the (pointed) homotopy commutative diagram
\begin{equation}\label{gtc}\xymatrix{
&& F_1( S^n) \ar[d]^{i_1^{\TC}} \\
&& G_1^{\TC}(S^n) \ar[d]^{g_1^{\TC}} \\
S^{2n-1} \ar[r]^{[\iota_1,\iota_2]\;\;\;} \ar@/^/[urur]^h &
S^n \vee S^n\; \ar@/^/[ur]^{\phi} \ar@{^{(}->}[r]&S^n\times S^n.
}\end{equation}
Here $[\iota_1,\iota_2]$ is the Whitehead product of the two inclusions $\iota_j\colon S^n\hookrightarrow S^n\vee S^n$ $(j=1,2)$, so the row is a cofiber sequence. The lifting $\phi$ exists since $S^n\vee S^n$ is a suspension, so that $$\TC_{S^n\vee S^n}(S^n)\leq\cat(S^n\vee S^n)\leq1.$$ These two inequalities are sharp in view of~\cite[Proposition 3.8(5)]{GGVhopf}: if $x\in H^n(S^n)$ is the generator, then $x\otimes1-1\otimes x$ is a zero-divisor detected on $S^n\vee S^n$. Further, since $F_1(S^n)$ is $(2n-2)$-connected, the map $g_1^{\TC}(S^n)$ is a $(2n-1)$-equivalence, so the lifting $\phi$ is unique (once again, we are using the blanket assumption $n\geq2$). Consequently, the Hopf set under consideration is the singleton consisting of the map $h$ ---the lifting to the fiber of the pointed composition $\phi\circ[\iota_1,\iota_2]$.

\medskip
Diagrams~(\ref{gcat}),~(\ref{gtc}), and the bottom right square in~(\ref{LJM}) can be combined into the larger homotopy commutative diagram
\begin{equation}\label{combed}\footnotesize
\xymatrix{
&&&&F_1(S^n)\ar[dl]^{i_1^{\TC}} \ar@{-->}[ddd]^{Q'_1}\\
&&&G_1^{\TC}(S^n)\ar[dl]^{g_1^{\TC}} \ar@{-->}[ddd]\\
S^{2n-1} \ar[r]_{[\iota_1,\iota_2]\;\;\;} \ar@/^3pc/[ururrr]^>>>>>>>>>>>>>>>h \ar@{=}[ddd] &
S^n\vee S^n\, \ar@{^{(}->}[r]
\ar@/^2pc/[urr]^>>>>>\phi \ar[ddd]_<<<<<<<<<<<<<<<<{(1,-1)} & S^n\times S^n \ar[ddd]_q\\
&&&&F_1(C_{S^n})\ar[dl]^{i_1}\\ &&&G_1(C_{S^n})\ar[dl]^{g_1}\\
S^{2n-1}\ar[r]_{[\iota,-\iota]} \ar@/^3pc/[ururrr]^>>>>>>>>>>>>>>{h_{[\iota,-\iota]\;\;\;}} & S^n\, \ar@{^{(}->}[r] \ar@/^2pc/[urr]^>>>>>{\sigma\;\;} & C_{S^n}
}\end{equation}
where the two dashed maps lie over $q$ and are obtained, by naturality of the join construction, from the commutative diagram
\begin{equation}\label{lasqs}\xymatrix{
\Omega(S^n)\ar[r]^{Q'} \ar[d] & \Omega(C_{S^n})\ar[d]\\
P(S^n)\ar[r]^{Q} \ar[d]^{e_{0,1}} & P_0(C_{S^n})\ar[d]^{e_1}\\
S^n\times S^n \ar[r]_q & C_{S^n}.
}\end{equation}
Such a diagram exists because $e_{0,1}$ is a fibrational replacement of the diagonal $\Delta\colon S^n\to S^n\times S^n$, and since the composition $q\circ\Delta$ is homotopically trivial.

\begin{lem}\label{BMconntty}
The map $Q'_1$ is a $(3n-2)$-equivalence.
\end{lem}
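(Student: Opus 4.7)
The plan is to recognise $Q'_1$ as the $2$-fold topological join of a much simpler map, and then to apply standard connectivity estimates for joins.

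First, reading off the definition in diagram~(\ref{lasqs}), a based loop $\omega\in\Omega(S^n)$ is sent by $Q'$ to the loop $t\mapsto q(\star,\omega(t))$ in $C_{S^n}$. Thus $Q'$ is naturally identified with $\Omega i$, where $i\colon S^n\hookrightarrow C_{S^n}$ is the inclusion of the bottom cell in the two-cell decomposition established in the proof of Lemma~\ref{exts}. Since the cofibre of $i$ is $S^{2n}$, the map $i$ is a $(2n-1)$-equivalence and hence $Q'=\Omega i$ is a $(2n-2)$-equivalence. Note also that both $\Omega(S^n)$ and $\Omega(C_{S^n})$ are $(n-2)$-connected, as $C_{S^n}$ is $(n-1)$-connected.

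Second, the Ganea fibrations $g_1^{\TC}(S^n)$ and $g_1(C_{S^n})$ are modelled as $2$-fold fiberwise joins of $e_{0,1}$ and $e_1$ respectively, so by naturality of the fiberwise join the map $Q$ induces $Q_1$ whose restriction to fibers is exactly the topological $2$-fold join
$$Q'\ast Q'\colon \Omega(S^n)\ast\Omega(S^n)\longrightarrow \Omega(C_{S^n})\ast\Omega(C_{S^n}).$$

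Finally, using $A\ast B\simeq\Sigma(A\wedge B)$, it suffices to bound the connectivity of $Q'\wedge Q'$. Factoring this smash as $(\Omega(C_{S^n})\wedge Q')\circ(Q'\wedge\Omega(S^n))$ and using that smashing with an $(n-2)$-connected space raises the connectivity of the $(2n-2)$-equivalence $Q'$ by $n-1$, each factor is a $(3n-3)$-equivalence; therefore so is $Q'\wedge Q'$, and suspending once yields the desired $(3n-2)$-equivalence $Q'_1\simeq\Sigma(Q'\wedge Q')$. The main point requiring care is the identification of the fiber restriction of $Q_1$ with $Q'\ast Q'$, which follows from the fiberwise join model used to define the Ganea fibrations, since fiberwise joins commute with passage to fibers.
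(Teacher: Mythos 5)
Your proof is correct and follows essentially the same route as the paper: identify $Q'$ with the looped inclusion of the bottom cell of $C_{S^n}$ (hence a $(2n-2)$-equivalence) and then pass to the fiberwise join, your explicit smash-product connectivity estimate being exactly the ``standard homology calculation'' the paper leaves implicit. The only cosmetic difference is that the paper justifies the identification of $Q'$ by citing $Q'\simeq\Omega(q\circ j_1)$ from \cite{GCVcofibre} rather than reading it off the definition (and your formula gives $q\circ j_2$ rather than $q\circ j_1$, which differs only by a sign on the bottom cell and is immaterial here).
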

\begin{proof}
As in~\cite{GCVcofibre} (see Theorem~10(b) and its proof), we have $Q'\simeq\Omega(q\circ j_1)$, where $j_1:S^n \to S^n\times S^n$ is the inclusion on the first factor. On the other hand, the lower right square of~(\ref{LJM}) implies that $q\circ j_1$ is homotopic to the inclusion of the bottom cell. Thus $Q'$ is a $(2n-2)$-equivalence, and the result follows from a standard homology calculation.
\end{proof}

\begin{lem}\label{remcom}
The two combed squares in~(\ref{combed}), namely the one involving the liftings $\phi$ and $\sigma$, and the one involving the Hopf invariants $h$ and $h_{[\iota,-\iota]}$, are homotopy commutative.
\end{lem}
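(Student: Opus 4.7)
The plan is to verify each of the two combed squares separately. In each case both routes around the square will be realized as lifts of a common map through an appropriately connected fibration (or fiber inclusion), and commutativity will follow from a uniqueness or singleton-Hopf-set argument made available by the paper's earlier analysis.

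For the $\phi$-$\sigma$ square, I would post-compose both routes with $g_1\colon G_1(C_{S^n})\to C_{S^n}$. The commutativity of the back face of~(\ref{combed}) (naturality of the fiberwise join applied to~(\ref{lasqs})), together with the defining properties of $\phi$ as a lift of the wedge inclusion $j\colon S^n\vee S^n\hookrightarrow S^n\times S^n$ and of $\sigma$ as a lift of the bottom-cell inclusion $S^n\hookrightarrow C_{S^n}$, identifies the two $g_1$-composed routes with $q\circ j$ and with the bottom-cell inclusion precomposed with $(1,-1)$, respectively. These two maps are homotopic by the homotopy pushout property of the bottom right square of~(\ref{LJM}). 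Since $C_{S^n}$ is $(n-1)$-connected, the fiber $F_1(C_{S^n})=\Omega C_{S^n}\ast\Omega C_{S^n}$ is $(2n-2)$-connected, so $g_1$ is a $(2n-1)$-equivalence. As $S^n\vee S^n$ has dimension $n\leq 2n-1$ (using the blanket assumption $n\geq 2$), elementary obstruction theory guarantees pointed-homotopy uniqueness of the lifts, establishing the first square.

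For the $h$-$h_{[\iota,-\iota]}$ square, post-composing both routes with the fiber inclusion $i_1\colon F_1(C_{S^n})\to G_1(C_{S^n})$, using the commutativity of the back face and of the just-proved $\phi$-$\sigma$ square, and invoking bilinearity of the Whitehead product (so that $(1,-1)\circ[\iota_1,\iota_2]=[\iota,-\iota]$), one obtains
\[
i_1\circ Q_1'\circ h\;\simeq\;\sigma\circ[\iota,-\iota]\;\simeq\;i_1\circ h_{[\iota,-\iota]}.
\]
Hence $Q_1'\circ h$ and $h_{[\iota,-\iota]}$ are two lifts of the common map $\sigma\circ[\iota,-\iota]$ through $i_1$. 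The main obstacle is passing from this agreement modulo indeterminacy to a genuine pointed homotopy $Q_1'\circ h\simeq h_{[\iota,-\iota]}$. For this I would exploit the naturality of the Ganea connecting homomorphism with respect to~(\ref{lasqs}): a diagram chase combining this naturality with the isomorphism $(Q_1')_*\colon\pi_{2n-1}(F_1(S^n))\xrightarrow{\cong}\pi_{2n-1}(F_1(C_{S^n}))$ of Lemma~\ref{BMconntty} (applicable since $2n-1<3n-2$ when $n\geq 2$) and with the vanishing of $\mathrm{im}(\partial^{\TC})$---a consequence of the $\TC$-Hopf set attached to $[\iota_1,\iota_2]$ being a singleton, as recorded in the paragraph following~(\ref{gtc})---shows that the difference $[Q_1'\circ h]-[h_{[\iota,-\iota]}]\in\pi_{2n-1}(F_1(C_{S^n}))$ vanishes, yielding the required homotopy and completing the proof.
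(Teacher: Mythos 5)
Your treatment of the $\phi$--$\sigma$ square is correct and is exactly the paper's argument: both routes lift the same map $q\circ j\simeq(\text{incl})\circ(1,-1)$ through $g_1$, which is a $(2n-1)$-equivalence, and $\dim(S^n\vee S^n)=n<2n-1$ forces uniqueness of the lift. Likewise, your setup for the second square --- post-composing with $i_1$, using the back face, the first square, and bilinearity of the Whitehead product to get $i_1\circ Q_1'\circ h\simeq\sigma\circ[\iota,-\iota]\simeq i_1\circ h_{[\iota,-\iota]}$ --- matches what the paper intends. You are also right to flag that the remaining issue is cancelling $i_1$.

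The gap is in how you cancel it. The paper invokes the standard fact that $i_1$ induces a monomorphism on every positive-dimensional homotopy group: since $g_1\colon G_1(C_{S^n})\simeq\Sigma\Omega C_{S^n}\to C_{S^n}$ is the evaluation, $\Omega g_1$ admits a section (a triangle identity of the loop--suspension adjunction), so the homotopy exact sequence of the Ganea fibration splits, $\partial^{\cat}=0$, and $i_1\circ Q_1'\circ h\simeq i_1\circ h_{[\iota,-\iota]}$ immediately gives $Q_1'\circ h\simeq h_{[\iota,-\iota]}$. Your substitute --- transporting the vanishing of $\mathrm{im}(\partial^{\TC})$ across~(\ref{lasqs}) by naturality --- does not close. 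Naturality gives $\partial^{\cat}\circ q_*=(Q_1')_*\circ\partial^{\TC}=0$, which only kills $\partial^{\cat}$ on the image of $q_*\colon\pi_{2n}(S^n\times S^n)\to\pi_{2n}(C_{S^n})$; the difference of your two fiber-lifts is an arbitrary element of $\mathrm{im}(\partial^{\cat})$ applied to $\pi_{2n}(C_{S^n})$, and you have no control over whether the relevant preimage lies in $\mathrm{im}(q_*)$. Blakers--Massey for the pair $(S^n\times S^n,\Delta S^n)$ only gives surjectivity of the comparison up to dimension $2n-1$, so surjectivity of $q_*$ in dimension $2n$ is neither automatic nor established. (A secondary quibble: the paragraph after~(\ref{gtc}) derives the singleton property from the uniqueness of $\phi$ and the connectivity of $g_1^{\TC}$; reading ``$\mathrm{im}(\partial^{\TC})=0$'' off the singleton statement reverses the logical dependence, although that vanishing is in fact true because $\pi_{2n}(S^n\vee S^n)\to\pi_{2n}(S^n\times S^n)$ is onto and $g_1^{\TC}$ has the section $\phi$ over the wedge.) Replace the diagram chase by the split-ness of the Ganea fibration over $C_{S^n}$ and the proof is complete.
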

\begin{proof}
The square relating $\phi$ and $\sigma$ commutes because ($n<2n-1$ and) $g_1$ is a $(2n-1)$-equivalence. The commutativity of the square relating $h$ and $h_{[\iota,-\iota]}$ then follows from the well known fact that $i_1$ induces a monomorphism in each positive dimensional homotopy group.
\end{proof}

\begin{cor}
The triviality of the $\TC$-Hopf invariant $h$ is equivalent to that of the $\cat$-Hopf invariant $h_{[\iota,-\iota]}$. Consequently $\TC(S^n)=\cat(C_{S^n})$.
\end{cor}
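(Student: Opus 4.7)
My plan is to combine the two preceding lemmas into an equivalence of Hopf-invariant vanishings, and then read off the categorical conclusion. Lemma~\ref{remcom} gives the homotopy $Q'_1\circ h\simeq h_{[\iota,-\iota]}$, since the rear square of~(\ref{combed}) joining these two Hopf invariants by $Q'_1$ on the fibers and the identity on $S^{2n-1}$ has just been shown to commute. Lemma~\ref{BMconntty} tells us that $Q'_1$ is a $(3n-2)$-equivalence, and under the standing assumption $n\geq 2$ we have $2n-1<3n-2$, so the induced map $(Q'_1)_{*}\colon \pi_{2n-1}(F_1(S^n))\to\pi_{2n-1}(F_1(C_{S^n}))$ is an isomorphism. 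Therefore $h\simeq\ast$ if and only if $h_{[\iota,-\iota]}\simeq\ast$, which is the first assertion of the corollary.

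For the categorical conclusion I would reuse the uniqueness of the liftings $\sigma$ and $\phi$ in diagrams~(\ref{gcat}) and~(\ref{gtc}): both $g_1$ and $g_1^{\TC}$ are $(2n-1)$-equivalences, while $S^n$ and $S^n\vee S^n$ have dimension at most $n<2n-1$, so these partial liftings are each unique up to pointed homotopy. Consequently $h$ is genuinely the sole obstruction to $\TC(S^n)\leq 1$, and $h_{[\iota,-\iota]}$ the sole obstruction to $\cat(C_{S^n})\leq 1$. Combining this with the classical upper bounds $\TC(S^n)\leq 2$ and $\cat(C_{S^n})\leq 2$ (the latter by Remark~\ref{cofibresuspension}) confines both invariants to $\{1,2\}$, and the equivalence of Hopf-invariant vanishings forces the common value. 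This delivers $\TC(S^n)=\cat(C_{S^n})$.

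I do not anticipate any serious obstacle: the genuine homotopy-theoretic content has already been packaged into Lemmas~\ref{BMconntty} and~\ref{remcom}, and what remains is a short assembly using uniqueness of liftings in the stable range together with the elementary upper bound~$2$. If anything, the single point that deserves care is that the Hopf invariants really are the \emph{sole} obstructions (and not merely one obstruction among many); this is exactly where uniqueness of $\sigma$ and $\phi$ is decisive, and it is the ingredient that keeps the argument tight.
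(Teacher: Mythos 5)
Your proposal is correct and follows essentially the same route as the paper: the paper's one-line proof (``just note that $\TC_{S^n\vee S^n}(S^n)=1=\cat(S^n)$'') is precisely shorthand for the assembly you spell out, namely that the unique partial liftings $\phi$ and $\sigma$ make $h$ and $h_{[\iota,-\iota]}$ the sole obstructions to $\TC(S^n)\leq 1$ and $\cat(C_{S^n})\leq 1$, while Lemmas~\ref{remcom} and~\ref{BMconntty} give $h_{[\iota,-\iota]}\simeq Q'_1\circ h$ with $(Q'_1)_*$ an isomorphism on $\pi_{2n-1}$. Your unpacking, including the use of the upper bound $2$ on both invariants, is exactly the intended argument.
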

\begin{proof}
Just note that $\TC_{S^n\vee S^n}(S^n)=1=\cat(S^n)$, where the first equality has been pointed out right after~(\ref{gtc}).
\end{proof}

\begin{rem}\label{elvalorexplicito}
Together with~\cite[Lemma~6.2]{GGVhopf}, the above considerations give an alternative proof of the well known equality $h_{[\iota,-\iota]}=\pm(1+(-1)^n)$.
\end{rem}

\section{Two-cell complexes in the metastable range with non-trivial Hopf invariant}

Throughout this section $X$ stands for a two-cell complex $S^p\cup_\alpha e^{q+1}$ whose attaching map $\alpha: S^q\to S^p$ ($q\geq p\geq2$) lies in the metastable range $2p-1< q\le 3p-3$ (so in fact $p\geq3$ and $q\geq p+3$), and has non-vanishing Hopf invariant $H(\alpha)$. As recalled after the proof of~\cite[Theorem~5.2]{GGVhopf}, $H(\alpha)$ factors (due to the metastable range hypothesis) as $$S^q\stackrel{H_0(\alpha)}{-\!\!\!-\!\!\!\longrightarrow} S^{2p-1}\stackrel{i}\hookrightarrow F_1(S^p)$$ where $i$ is the bottom cell inclusion, which splits as a wedge summand, so that $H(\alpha)$ can be identified directly with the stable map $H_0(\alpha)$. Recall in addition that, in Hilton-Whitehead's definition, $H(\alpha)$ is the obstruction for $\alpha$ to be a co-H-map. Namely, the fiber of the inclusion $S^p\vee S^p\hookrightarrow S^p\times S^p$ is $F_1(S^p)$, and the fiber inclusion restricted to the bottom cell is $[\iota_1,\iota_2]$. So, if $\nu$ stands for pinch maps, we have by definition
\begin{equation}\label{obstrusum}
\nu\circ\alpha-(\alpha\vee\alpha)\circ\nu=[\iota_1,\iota_2]\circ H_0(\alpha)\in\pi_q(S^p\vee S^p).
\end{equation}
Equivalently, since the homotopy pullback of the inclusion $j\colon S^p\vee S^p\hookrightarrow S^p\times S^p$ along the diagonal $\Delta\colon S^p\to S^p\times S^p$ is $\varepsilon\colon\Sigma\Omega S^p\to S^p$, the adjoint of the identity on $\Omega S^p$ (i.e.~the first Ganea map $G_1(S^p)\to S^p$), and since the pinch map $\nu$ (the unique homotopy lifting of $\Delta$ along $j$) corresponds to the canonical section of $\varepsilon$ (i.e.~the inclusion $\kappa\colon S^p\hookrightarrow\Sigma\Omega S^p$ of the bottom cell), we see that, by definition, the difference of the two compositions in the diagram
\begin{equation}\label{alaHilton}\xymatrix{
\Sigma\Omega S^q \ar[r]^{\Sigma\Omega\alpha\;\;}&\Sigma\Omega S^p\\
S^q\ar[r]_\alpha\ar[u]^{\kappa}&S^p\ar[u]^{\kappa}
}\end{equation}
is the image of the Hopf invariant $H(\alpha)$ under the fiber inclusion $F_1(S^p)\to \Sigma\Omega S^p$ or, equivalently, the image of $H_0(\alpha)$ under the inclusion $$j_2\colon S^{2p-1}\hookrightarrow\Sigma\Omega S^p\simeq S^p\vee S^{2p-1}\vee S^{3p-2}\cdots$$ of the next-to-the-bottom cell. What we need to record from this discussion is the well known relation in~(\ref{giladj}) below. Namely, since the map $\kappa$ on the left of~(\ref{alaHilton}) can be seen as the suspension of the bottom cell inclusion $S^{q-1}\hookrightarrow\Omega S^q$, the composite $\Sigma\Omega\alpha\circ\kappa$ is homotopic to $\Sigma\alpha'$, the suspension of the adjoint of $\alpha$. Therefore
\begin{equation}\label{giladj}
\Sigma\alpha'-\kappa\circ\alpha=j_2\circ H_0(\alpha).
\end{equation}

Consider the cone decomposition
$$\ast=C_0\subset C_1\subset C_2\subset C_3\subset C_4=X\times X$$ given by $C_1=S^p\vee S^p$, $C_2=(X\vee X)\cup(S^p\times S^p)$, and $C_3=(X\times S^p)\cup(S^p\times X)$, and obvious attaching maps
\begin{equation}\label{lasetapasC}
\Sigma_i\stackrel{\alpha_i}\longrightarrow C_i\hookrightarrow C_{i+1}
\end{equation}
where $\Sigma_0=S^{p-1}\vee S^{p-1}$, $\Sigma_1=S^{q}\vee S^{2p-1}\vee S^{q}$, $\Sigma_2=S^{p+q}\vee S^{p+q}$ and $\Sigma_3=S^{2q+1}$.

\medskip
By dimensional reasons, the diagonal map $\Delta\colon X\to X\times X$ can be deformed to a (unique up to homotopy) map $\Delta_{2}:X\to C_2$, and this yields maps $\Delta_{i}:X\to C_i$ ($i=3,4$) fitting in the homotopy commutative diagram
\begin{equation}\label{4x2}\xymatrix{
S^p\,\ar@{^(->}[r]\ar[d]^{\nu} & X \ar@{=}[r] \ar[d]^{\Delta_{2}} & X \ar@{=}[r] \ar[d]^{\Delta_{3}} &X\ar[d]^{\Delta}\\
C_ 1 \ar@{^(->}[r] & C_ 2\ar@{^(->}[r] &C_ 3 \ar@{^(->}[r] & X\times X.
}\end{equation}
(For the homotopy commutativity of the left-most square, keep in mind that $C_2\hookrightarrow X\times X$ is a $(p+q)$-equivalence.)

\begin{prop}\label{4x3}
There is an extended homotopy commutative diagram
\begin{equation}\label{sequenceofDs}\xymatrix{
S^p\,\ar@{^(->}[r]\ar[d]^{\nu} & X \ar@{=}[r] \ar[d]^{\Delta_{2}} & X \ar@{=}[r] \ar[d]^{\Delta_{3}} &X\ar[d]^{\Delta}\\
C_ 1 \ar@{^(->}[r]\ar[d]^{q_{1}} & C_ 2 \ar@{^(->}[r] \ar[d]^{q_{2}} &C_ 3 \ar@{^(->}[r]\ar[d]^{q_{3}} & X\times X\ar[d]^{q_{}}\\
D_ 1 \ar@{^(->}[r] & D_ 2 \ar@{^(->}[r] &D_ 3 \ar@{^(->}[r] & C_X
}\end{equation}
whose columns are cofiber sequences, and whose bottom row yields a cone decomposition $$\star=D_0\subset D_1\subset D_2\subset D_3\subset D_4=C_X$$ with attaching maps of the form
\begin{equation}\label{lasetapasD}
S_i \stackrel{\beta_i}{\longrightarrow}D_{i}\hookrightarrow D_{i+1}.
\end{equation}
Here $S_0=S^{p-1}$, $S_1=S^{2p-1}\vee S^q$, $S_2=S^{p+q}\vee S^{p+q}$, and $S_3=S^{2q+1}$. The cofiber sequences~(\ref{lasetapasC}) and ~(\ref{lasetapasD}) fit in homotopy commutative diagrams
\begin{equation}\label{comparaetapas}\xymatrix{
\Sigma_i\ar[r]^{\alpha_i}\ar[d]^{\tau_{i}} & C_i \ar@{^{(}->}[r] \ar[d]^{q_{i}} & C_{i+1} \ar[d]^{q_{i+1}} \\ S_i \ar[r]^{\beta_i} & D_i \ar@{^{(}->}[r] & D_{i+1}
}\end{equation}
where $\tau_2$ and $\tau_3$ are homotopy equivalences.
\end{prop}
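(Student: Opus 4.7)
The plan is to construct each $D_i$ as the homotopy cofiber of the corresponding column of~(\ref{4x2}) and then to compute the successive quotients $D_{i+1}/D_i$ via a $3\times 3$ (octahedron) analysis of the cofiber sequences.

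Set $D_1:=\operatorname{hocofib}(\nu\colon S^p\to C_1)$ and $D_i:=\operatorname{hocofib}(\Delta_i\colon X\to C_i)$ for $i=2,3$, with $D_4=C_X$; the $q_i$ are the canonical projections. Naturality of the cofiber construction applied to the homotopy commutative diagram~(\ref{4x2}) produces horizontal maps $D_i\to D_{i+1}$ rendering~(\ref{sequenceofDs}) homotopy commutative. The top right homotopy pushout in~(\ref{LJM}) gives $D_1\simeq S^p$, so the first cone step is $D_0=\ast\subset D_1\simeq\Sigma S_0$ with $\beta_0\colon S^{p-1}\to\ast$ the constant map.

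For the remaining steps, apply the octahedron principle to each pair of adjacent columns: the induced map $D_i\to D_{i+1}$ fits into a cofiber sequence
\[\operatorname{cofib}(\text{top vertical at }i)\longrightarrow C_{i+1}/C_i\longrightarrow D_{i+1}/D_i.\]
When $i=2,3$ the top vertical is the identity of $X$, hence $D_{i+1}/D_i\simeq C_{i+1}/C_i=\Sigma\Sigma_i$; since $\Sigma_i=S_i$ as wedges of spheres in these cases, one may take $\tau_2$ and $\tau_3$ to be identities (hence homotopy equivalences), and homotopy commutativity of~(\ref{comparaetapas}) then follows from the naturality of the cofibration sequences. When $i=1$ the cofiber of the top vertical $S^p\hookrightarrow X$ is $S^{q+1}$, yielding
\[S^{q+1}\longrightarrow S^{q+1}\vee S^{2p}\vee S^{q+1}\longrightarrow D_2/D_1,\]
and the two outer $S^{q+1}$ summands of $\Sigma\Sigma_1$ are the top cells of the two copies of $X$ in $X\vee X\subset C_2$, each covered with degree one by the top-cell contribution of $\Delta_2$. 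Consequently the left-hand map collapses these two summands against the top-cell class of $X/S^p$, leaving $D_2/D_1\simeq S^{2p}\vee S^{q+1}=\Sigma S_1$ with $S_1=S^{2p-1}\vee S^q$; the corresponding $\tau_1\colon\Sigma_1\to S_1$ is the resulting folding map, an identity on the $S^{2p-1}$ summand.

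With the quotients identified, each inclusion $D_i\hookrightarrow D_{i+1}$ is realised as the mapping cone of an attaching map $\beta_i\colon S_i\to D_i$ of the asserted form, producing the claimed cone decomposition of $C_X$. The main technical point is the $i=1$ step: one must verify that in the passage from $C_2$ to $D_2$ the two diagonally attached top cells of $X$ genuinely cancel against one another, so that only the Whitehead product summand $[\iota_1,\iota_2]$ (contributing the $S^{2p}$) and a residual top-cell class of $X$ (contributing the $S^{q+1}$) survive in $D_2/D_1$. This isolation of the Hopf-relevant information, which extends the sphere computation in the proof of Lemma~\ref{exts}, is the crucial input that drives the metastable analysis in the remainder of the section.
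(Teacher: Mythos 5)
Your overall architecture agrees with the paper's: both define $D_i$ as the homotopy cofiber of the $i$-th column of~(\ref{4x2}) and both obtain the steps for $i=0,2,3$ from $3\times3$ cofiber diagrams in which one column or row is trivial (for $i=2,3$ the top horizontal map is $\mathrm{id}_X$, so $\beta_i=q_i\circ\alpha_i$ and $\tau_i$ is essentially the identity, exactly as you say). The problem is the step $D_1\subset D_2$, which you yourself flag as ``the main technical point'' but do not actually resolve. Your octahedron argument only computes the \emph{subquotient} $D_2/D_1\simeq S^{2p}\vee S^{q+1}$. That is strictly weaker than what the proposition asserts: a cone decomposition requires an attaching map $\beta_1\colon S^{2p-1}\vee S^q\to D_1\simeq S^p$ with $D_2\simeq\operatorname{cofib}(\beta_1)$, together with the comparison map $\tau_1$ making~(\ref{comparaetapas}) homotopy commute. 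Knowing the quotient does not produce $\beta_1$: the pair $(S^p\cup e^{2p},S^p)$ is only $(2p-1)$-connected while $q>2p-1$ in the range of Section~3, so the attaching map of the $(q+1)$-cell of $D_2$ has no a priori reason to compress into $D_1=S^p$; this compression is precisely the content that must be proved, not a formal consequence of the cell structure.

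The paper's proof supplies exactly the missing ingredient, and it is the Hopf invariant identity~(\ref{obstrusum}) that does the work: one rewrites $\nu\circ\alpha$ as the composite $(\alpha,\alpha,[\iota_1,\iota_2])\circ(1,1,H_0(\alpha))$ through $S^q\vee S^q\vee S^{2p-1}$, and applies the $3\times3$ lemma to \emph{that} square (diagram~(\ref{lucilesolution})). The cofiber $Y$ of $(1,1,H_0(\alpha))$ is identified with $S^{2p-1}\vee S^q$ using the stable-range hypothesis, $\beta_1$ is the induced map $Y\to D_1$, and $\tau_1$ is the cofiber projection $\Sigma_1\to Y$ --- all three appear simultaneously with the required commutativity. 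Your proposal never invokes~(\ref{obstrusum}), and your ``degree one coverage of the top cells'' discussion is a homology computation of the quotient that moreover overlooks the fact that the $S^{2p}$-component of the map $S^{q+1}\to C_2/C_1$ is $\pm\Sigma H_0(\alpha)$ rather than zero (harmless for the homotopy type of $D_2/D_1$, but symptomatic of the missing analysis). To repair the proof you should replace the $i=1$ octahedron step by the factorization of $\nu\circ\alpha$ coming from the Berstein--Hilton formula and run the $3\times3$ lemma on the resulting square.
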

\begin{proof}
Write the equality in~(\ref{obstrusum}) as $$\nu\circ\alpha=(\alpha\vee\alpha)\circ\nu+[\iota_1,\iota_2]\circ H_0(\alpha)$$ and note that the latter sum decomposes as $$S^q \stackrel{(1,1,H_0(\alpha))\;}{-\!\!\!-\!\!\!-\!\!\!-\!\!\!-\!\!\!-\!\!\!-\!\!\!\longrightarrow} S^q\vee S^q \vee S^{2p-1}\stackrel{(\alpha,\alpha,[\iota_1,\iota_2])\;}{-\!\!\!-\!\!\!-\!\!\!-\!\!\!-\!\!\!-\!\!\!-\!\!\!\longrightarrow} S^p\vee S^p,$$ where $(1,1,H_0(\alpha))$ is the composition
$$
S^q\stackrel{\nu}\longrightarrow S^q\vee S^q \stackrel{\nu\hspace{.3mm}\vee H_0(\alpha)\;}{-\!\!\!-\!\!\!-\!\!\!-\!\!\!-\!\!\!\longrightarrow} S^q\vee S^q \vee S^{2p-1},
$$
and $(\alpha,\alpha,[\iota_1,\iota_2])$ is the composition
$$
S^q\vee S^q \vee S^{2p-1} \stackrel{\alpha\vee\alpha\vee[\iota_1,\iota_2]\;}{-\!\!\!-\!\!\!-\!\!\!-\!\!\!-\!\!\!-\!\!\!-\!\!\!\longrightarrow} S^p\vee S^p\vee S^p\vee S^p \stackrel{\nabla}{\longrightarrow} S^p\vee S^p.
$$
We thus have the homotopy commutative diagram
$$\xymatrix{
S^q \ar[d]_-{(1,1,H_0(\alpha))} \ar[rr]^{\alpha} && S^p \ar[d]^{\nu}\\
 S^q\vee S^q \vee S^{2p-1} \ar[rr]_{\hspace{5.5mm}(\alpha,\alpha,[\iota_1,\iota_2])} && S^p\vee S^p,}$$
which is then extended to the 3-by-3 homotopy commutative diagram below by taking cofibers of rows and columns.
\begin{equation}\label{lucilesolution}\xymatrix{
S^q \ar[d]_{(1,1,H_0(\alpha))} \ar[rr]^{\alpha} && S^p\, \ar[d]^{\nu} \ar@{^{(}->}[r] & X \ar[d]\\  S^q\vee S^q \vee S^{2p-1} \ar[rr]_{\hspace{5mm}(\alpha, \alpha,[\iota_1,\iota_2])} \ar[d]&& S^p\vee S^p\, \ar@{^{(}->}[r] \ar[d] & C_2 \ar[d] \\  Y \ar[rr] && D_1 \ar@{^{(}->}[r] & D_2
}\end{equation}
In view of the left-most square in~(\ref{4x2}), the top right-most vertical map $X\to C_ 2$ can be chosen to be $\Delta_{2}$. Note also that the top right square in~(\ref{LJM}) shows that $D_1$ has the homotopy type of $S^p$. Further, the map $(1,1,H_0(\alpha))$ lies in the stable range, so that its cofibre $Y$ is a 1-connected suspension. In fact, the left-most vertical cofiber sequence in~(\ref{lucilesolution}) induces short exact sequences in integral homology, from which it is easy to see that $Y$ has the homology type and, then, the homotopy type of $S^{2p-1}\vee S^q$. In particular, $D_2$ has the homotopy type of a three-cell complex $S^p\cup e^{2p}\cup e^{q+1}$. This yields the assertions relevant for the first two columns in~(\ref{sequenceofDs}). For instance, the map $\tau_0$ in~(\ref{comparaetapas}) corresponds to $(1,-1)\colon S^{p-1}\vee S^{p-1}\to S^{p-1}$, while $\tau_1$ is the left bottom vertical map in~(\ref{lucilesolution}).
The rest of the assertions follow easily by extending each of the commutative squares
$$\xymatrix{
\star \ar[r] \ar[d] & X\ar[d]^{\Delta_{2}} && \star \ar[r] \ar[d] & X\ar[d]^{\Delta_{3}} \\ S^{p+q}\vee S^{p+q}\ar[r]_{\hspace{6mm}\alpha_2} & C_2 && S^{2q+1} \ar[r]_{\alpha_3} & C_3
}$$
to corresponding 3-by-3 homotopy commutative diagrams of cofibrations analogous to~(\ref{lucilesolution}).
\end{proof}

Next we compare the $\cat$-Hopf sets arising from the cofiber sequences~(\ref{lasetapasD}) with the $\TC$-Hopf sets arising from the cofiber sequences~(\ref{lasetapasC}) ---the latter studied in~\cite{GGVhopf}. Except for a few additional technical considerations, the method will be the one used in the previous section for explaining, from a homotopical viewpoint, the equality $\TC(S^n)=\cat(C_{S^n})$.

\medskip
First we need the analogues of~(\ref{combed}) and~(\ref{lasqs}). For the latter, consider the commutative diagrams
$$\xymatrix{
\Omega(X)\ar[r]^{Q'} \ar[d] & \Omega(C_{X})\ar[d] && F_n(X)\ar[r]^{Q'_n} \ar[d] & F_n(C_X)\ar[d]\\
P(X)\ar[r]^{Q} \ar[d]^{e_{0,1}} & P_0(C_{X})\ar[d]^{e_1} && G_n^{\TC}(X)\ar[r]^{Q_n} \ar[d]_{g_n^{\TC}} & G_n(C_X)\ar[d]^{g_n}\\
X\times X \ar[r]_q & C_{X} && X\times X \ar[r]_q & C_X,
}$$
where the second one is obtained from the first one in terms of the fiberwise join construction.

As in the case of spheres, the arguments in the proof of~\cite[Theorem~10(b)]{GCVcofibre} show that $Q'$ is homotopic to $\Omega(q\circ j_1)$, where $j_1:X \to X\times X$ is the inclusion on the first factor. Since $q\circ j_1$ is a $(2p-1)$-equivalence, we get:

\begin{lem}\label{equirango}
$Q'_n$ is a $(p(n+2)-2)$-equivalence.
\end{lem}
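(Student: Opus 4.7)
The plan is to induct on $n$, reducing to the case $n=0$ which is essentially contained in the paragraph preceding the statement. For $n=0$, it suffices to verify that $q\circ j_1\colon X\to C_X$ is a $(2p-1)$-equivalence. By diagram~(\ref{sequenceofDs}) the $p$-skeleton of $X$ is $S^p$; this $S^p$ maps by $j_1$ into the first wedge summand of $C_1=S^p\vee S^p$ and then by the cocomponent $(1,-1)=q_1$ identically onto $D_1\simeq S^p\hookrightarrow C_X$. Since $X$ is obtained from $S^p$ by attaching a single cell in dimension $q+1\geq 2p+1$, and $C_X$ is obtained from $D_1=S^p$ by cells in dimensions $\geq 2p$, both $S^p\hookrightarrow X$ and $S^p\hookrightarrow C_X$ are $(2p-1)$-equivalences. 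Two-out-of-three for $n$-equivalences then gives the claim for $q\circ j_1$, and looping yields $Q'_0=Q'$ as a $(2p-2)$-equivalence, matching $p(n+2)-2$ at $n=0$.

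For the inductive step, I would identify $Q'_n$ on fibers with the iterated join $(Q')^{*(n+1)}\colon(\Omega X)^{*(n+1)}\to(\Omega C_X)^{*(n+1)}$, using the standard description $F_n(Y)\simeq(\Omega Y)^{*(n+1)}$ together with naturality of the fiberwise join. Note that $(\Omega X)^{*n}$ is $(np-2)$-connected, since $\Omega X$ is $(p-2)$-connected. Using $A*B\simeq\Sigma(A\wedge B)$ and that smashing preserves cofiber sequences, the cofiber of $f*\mathrm{id}_Z$ is $C_f*Z$, whose connectivity is $\mathrm{conn}(C_f)+\mathrm{conn}(Z)+2$.

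Assuming inductively that $(Q')^{*n}$ is a $(p(n+1)-2)$-equivalence, I would factor
$$
(Q')^{*(n+1)} = \bigl((Q')^{*n}*\mathrm{id}_{\Omega C_X}\bigr)\circ\bigl(\mathrm{id}_{(\Omega X)^{*n}}*Q'\bigr)
$$
and estimate each factor. For $\mathrm{id}*Q'$, the cofiber $(\Omega X)^{*n}*C_{Q'}$ has connectivity $(np-2)+(2p-2)+2=p(n+2)-2$; for $(Q')^{*n}*\mathrm{id}$, the cofiber $C_{(Q')^{*n}}*\Omega C_X$ has connectivity $(p(n+1)-2)+(p-2)+2=p(n+2)-2$, using the inductive hypothesis. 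Both factors, hence their composite $Q'_n$, are $(p(n+2)-2)$-equivalences, completing the induction.

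The one matter warranting care is the identification of the fiberwise-join map $Q'_n$ with the iterated join $(Q')^{*(n+1)}$ at the fiber level; this is a standard consequence of naturality of the fiberwise join applied to the square between $e_{0,1}$ and $e_1$ displayed immediately above the statement. Beyond that, the proof is a routine connectivity bookkeeping.
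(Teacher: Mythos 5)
Your proposal is correct and follows essentially the same route as the paper: the paper deduces the lemma from the fact that $Q'\simeq\Omega(q\circ j_1)$ is a $(2p-2)$-equivalence together with the (unspoken) ``standard homology calculation'' on iterated joins of $(p-2)$-connected fibers, which is precisely the connectivity bookkeeping your induction makes explicit. Your verification of the base case (that $q\circ j_1$ is a $(2p-1)$-equivalence via the cell structures of $X$ and $C_X$) simply fills in a step the paper asserts without proof.
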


Let $\gamma_n\colon \Gamma_n\to X\times X$ denote the pullback of $g_n$ along $q$, so that the induced map $\mathcal{Q}_n\colon G_n^{\TC}(X)\to \Gamma_n$ is a $(p(n+2)-2)$-equivalence. These maps fit into the commutative diagram
$$\xymatrix{
F_n(X) \ar[rr]^{Q'_n} \ar[d] && F_n(C_X)\ar[d]\ar[ld] \\
G_n^{\TC}(X)\ar[r]^{\hspace{3mm}\mathcal{Q}_n} \ar[d]_{g_n^{\TC}} & \Gamma_n \ar[r]_{q_n\hspace{4.5mm}} \ar[dl]^{\gamma_n}& G_n(C_X)\ar[d]^{g_n}\\
X\times X \ar[rr]_q && C_X
}$$
and, by restriction (with respect to the bottom squares in~(\ref{sequenceofDs})), we get, for $1\leq i\leq4$, the top part (without the dotted maps) of the following commutative 3D-diagram:
\begin{equation}\label{masdia}\xymatrix{
&&&F_n(X)\ar[rr]^{Q'_n} \ar[d] && F_n(C_X)\ar[d]\ar[ld]\\
&&&G_{n,i}^{\TC}(X)\ar[r]^{\hspace{2.6mm}\mathcal{Q}_{n,i}} \ar[d]|-<<<{\rule{0mm}{2.5mm}\mbox{\tiny$g_{n,i}^{\TC}$}} &\Gamma_{n,i} \ar[r]^{q_{n,i}\hspace{4.5mm}} \ar[dl]^{\gamma_{n,i}} & G_{n,i}(C_X)\ar[d]|-<<<<{\rule{0mm}{2mm}\raisebox{1mm}{\scriptsize$g_{n,i}$}}\\
&&&C_{i} \ar[rr]_{\hspace{9mm}q_{i}} && D_{i} \\
&&C_{i-1} \ar@{.>}[ruu]|->>>>>>>{\rule{0mm}{4.5mm}\raisebox{2.25mm}{${}_{\lambda_{i-1}^{\TC}}$}} \ar[ru]\ar[rr]_{q_{i-1}} && D_{i-1} \ar[ru] \ar@{.>}[ruu]|->>>>>>>{\rule{0mm}{4.5mm}\raisebox{2.25mm}{${}_{\lambda_{i-1}^{\cat}}$}} \\
\Sigma_{i-1} \ar@{.>}[rrruuuu]|->>>>>>>>>>>>>>>>>>>>>>>>{\rule{0mm}{4.5mm}\raisebox{2.25mm}{${}_{h_{i-1}^{\TC}}$}} \ar[rrr]^>>>>>>>>>>>>>{\tau_{i-1}} \ar[rru]_{\alpha_{i-1}} &&& S_{i-1} \ar[ru]_{\beta_{i-1}}
\ar@{.>}[rruuuu]|->>>>>>>>>>>>>>>>>>>>>>{\rule{0mm}{4.5mm}\raisebox{2.25mm}{${}_{h_{i-1}^{\cat}}$}}
}\end{equation}
Note that $\mathcal{Q}_{n,i}$ inherits the connectivity properties of~$\mathcal{Q}_n$ and $Q'_n$.

\begin{prop}\label{puntodeinicio}
The bottom two Hopf sets coming from each of the ``walls'' in~(\ref{masdia}) are non-trivial, that is,
$\TC_{C_2}(X)=\cat_{D_2}(C_X)=2$.
\end{prop}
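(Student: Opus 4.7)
The plan is to sandwich each of the two quantities between matching bounds of $2$. The upper bounds are immediate from the fact that the cone decompositions $\star=C_0\subset C_1\subset C_2$ and $\star=D_0\subset D_1\subset D_2$ have length $2$, so the standard estimates $\TC_{C_2}(X)\leq \cat(C_2)\leq 2$ and $\cat_{D_2}(C_X)\leq \cat(D_2)\leq 2$ hold. The real content is the matching lower bounds, both of which I would extract from the fact that the standing assumption $H(\alpha)\neq 0$, together with Berstein-Hilton's theorem~\cite{MR0126276}, forces $\cat(X)=2$.

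For $\TC_{C_2}(X)\geq 2$, I would argue by contradiction. A hypothetical section $\sigma\colon C_2\to G_1^{\TC}(X)|_{C_2}$ restricts along $X\times\{\star\}\hookrightarrow X\vee X\hookrightarrow C_2$ to a section of $g_1^{\TC}|_{X\times\{\star\}}$. The path-reversal homeomorphism $\gamma\mapsto(t\mapsto\gamma(1-t))$ identifies the restriction of $e_{0,1}\colon P(X)\to X\times X$ to $X\times\{\star\}$ with the based-path fibration $e_1\colon P_0(X)\to X$; taking fiberwise joins extends this to an isomorphism of fibrations $g_1^{\TC}|_{X\times\{\star\}}\cong g_1(X)$ over $X$. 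Under this identification, the restricted $\sigma$ becomes a section of $g_1(X)$, forcing $\cat(X)\leq 1$, which contradicts $\cat(X)=2$.

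For $\cat_{D_2}(C_X)\geq 2$, I would bootstrap from the previous paragraph using the connectivity of $\mathcal{Q}_1$. A hypothetical section $s\colon D_2\to G_1(C_X)|_{D_2}$ pulls back along $q_2\colon C_2\to D_2$ to the section $\tilde s\colon C_2\to\Gamma_1|_{C_2}$ of $\gamma_1|_{C_2}$ defined by $\tilde s(x)=(x,s(q_2(x)))$; the image lands in $\Gamma_1=X\times X\times_{C_X}G_1(C_X)$ because the commutativity of~(\ref{sequenceofDs}) gives $g_1(s(q_2(x)))=q_2(x)=q(x)$ in $C_X$. Since $\mathcal{Q}_1\colon G_1^{\TC}(X)\to\Gamma_1$ is a $(3p-2)$-equivalence (Lemma~\ref{equirango} with $n=1$, transported from fibers to total spaces) and $\dim(C_2)=q+1\leq 3p-2$ by the strict metastable hypothesis $q\leq 3p-3$, obstruction theory lifts $\tilde s$ through $\mathcal{Q}_1$ to a map $\hat s\colon C_2\to G_1^{\TC}(X)$, and the homotopy lifting property of $g_1^{\TC}$ over $X\times X$ rectifies $\hat s$ into a genuine section of $g_1^{\TC}|_{C_2}$. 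This gives $\TC_{C_2}(X)\leq 1$, contradicting the previous paragraph.

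The main technical care I foresee is in this final rectification step: the obstruction-theoretic lift $\hat s$ is initially only guaranteed to satisfy $g_1^{\TC}\circ\hat s\simeq(C_2\hookrightarrow X\times X)$, so turning this into a strict section identity requires deforming $\hat s$ using the fibration structure of $g_1^{\TC}$ (rather than just weak-equivalence properties of $\mathcal{Q}_1$), while staying inside the dimension range in which the $(3p-2)$-equivalence guarantees no obstruction.
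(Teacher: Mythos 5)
Your proof is correct, and for the substantive half --- the lower bound $\cat_{D_2}(C_X)\geq 2$ --- it is exactly the paper's argument: pull a hypothetical section of $g_{1,2}$ back to a section of $\gamma_{1,2}$, lift it through the $(3p-2)$-equivalence $\mathcal{Q}_{1,2}$ using $\dim(C_2)=q+1\leq 3p-2$, rectify to a genuine section of $g_{1,2}^{\TC}$, and contradict $\TC_{C_2}(X)=2$. The upper bounds via cone-length also match. Where you genuinely diverge is the input $\TC_{C_2}(X)\geq 2$: the paper simply cites \cite[Example~5.3]{GGVhopf} for $\TC_{C_2}(X)=2$, whereas you derive the lower bound from scratch by restricting a putative section over $C_2$ to $X\times\{\star\}\subseteq X\vee X\subseteq C_2$, identifying $g_1^{\TC}|_{X\times\{\star\}}$ with the Ganea fibration $g_1(X)$ via path reversal, and invoking Berstein--Hilton \cite{MR0126276} to get $\cat(X)=2$ from the standing hypothesis $H(\alpha)\neq 0$. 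This is a clean, self-contained and more elementary route (it is the classical proof of $\cat(X)\leq\TC(X)$ localized over one axis), and it makes the proposition independent of the cited example; the trade-off is only that the paper's citation plugs directly into the Hopf-set machinery it develops elsewhere, while your argument uses none of it. Your closing caveat about rectifying the homotopy section into a strict one is handled, as you say, by the homotopy lifting property of the fibration $g_1^{\TC}$, and causes no difficulty.
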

\begin{proof}
The fact that $\TC_{C_2}(X)=2$ is proved in~\cite[Example~5.3]{GGVhopf}, whereas the inequality $\cat_{D_2}(C_X)\leq2$ holds by cone-length considerations~(\cite[Proposition~3.9]{GGVhopf}). To complete the proof, assume for a contradiction that $\cat_{D_2}(C_X)\leq1$. Then the map $g_{1,2}$ in~(\ref{masdia}) admits a section, which can then be pulled back to a section of $\gamma_{1,2}$. The latter section factors (up to homotopy) through the $(3p-2)$-equivalence $\mathcal{Q}_{1,2}$, since $\dim(C_2)=q+1\leq3p-2$. This yields a section of $g_{1,2}^{\TC}$, which contradicts the fact that $\TC_{C_2}(X)=2$.
\end{proof}

\begin{defn}\label{comptbs}
In the setting of~(\ref{masdia}), liftings $\lambda^{\TC}_{i-1}$ and $\lambda^{\cat}_{i-1}$ of, respectively, $g_{n,i}^\TC$ and $g_{n,i}$ are said to be \emph{compatible} provided $\lambda^{\cat}_{i-1}\circ q_{i-1}\simeq q_{n,i}\circ \mathcal{Q}_{n,i}\circ\lambda^{\TC}_{i-1}$. Note that, in such a case, the resulting maps $h_{i-1}^{\TC}$ and $h_{i-1}^{\cat}$ are compatible in the sense that $h_{i-1}^{\cat}\circ\tau_{i-1}\simeq Q'_n\circ h_{i-1}^{\TC}$.
\end{defn}

\begin{lem}\label{levantamientoscompatibles}
Assume $n\geq2$ and $i\geq3$ in~(\ref{masdia}). For any lifting $\lambda^{\cat}_{i-1}$ of $g_{n,i}$ there is a compatible lifting $\lambda^{\TC}_{i-1}$ of $g_{n,i}^{\TC}$. Conversely, for any lifting $\lambda^{\TC}_{i-1}$ of $g_{n,i}^{\TC}$ there is a compatible lifting~$\lambda^{\cat}_{i-1}$ of $g_{n,i}$.
\end{lem}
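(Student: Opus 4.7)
For the direction from $\cat$ to $\TC$, the plan is a pullback-and-lift. Given $\lambda^{\cat}_{i-1}$, the pair consisting of $C_{i-1}\hookrightarrow C_i$ and $\lambda^{\cat}_{i-1}\circ q_{i-1}$ satisfies the compatibility condition required by the pullback $\Gamma_{n,i}$, since the bottom squares of~(\ref{sequenceofDs}) give $g_{n,i}\circ\lambda^{\cat}_{i-1}\circ q_{i-1}\simeq q_i\circ(C_{i-1}\hookrightarrow C_i)$. This yields $\tilde{\lambda}\colon C_{i-1}\to\Gamma_{n,i}$. I would then lift $\tilde{\lambda}$ through the $(p(n+2)-2)$-equivalence $\mathcal{Q}_{n,i}$, which is possible as soon as $\dim(C_{i-1})\leq p(n+2)-2$. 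The critical case is $i=4$, where the bound reads $p+q+1\leq p(n+2)-2$; for $n\geq 2$ this is guaranteed by the metastable hypothesis $q\leq 3(p-1)$. Because $g^{\TC}_{n,i}=\gamma_{n,i}\circ\mathcal{Q}_{n,i}$ and $\gamma_{n,i}\circ\tilde{\lambda}$ is the inclusion $C_{i-1}\hookrightarrow C_i$, the resulting $\lambda^{\TC}_{i-1}$ is automatically a lifting of $g^{\TC}_{n,i}$, and the compatibility $\lambda^{\cat}_{i-1}\circ q_{i-1}\simeq q_{n,i}\circ\mathcal{Q}_{n,i}\circ\lambda^{\TC}_{i-1}$ is built into the construction.

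For the converse direction, I would set $\mu:=q_{n,i}\circ\mathcal{Q}_{n,i}\circ\lambda^{\TC}_{i-1}$. A diagram chase in~(\ref{masdia}) identifies $g_{n,i}\circ\mu$ with the composite $C_{i-1}\xrightarrow{q_{i-1}}D_{i-1}\hookrightarrow D_i$, so producing $\lambda^{\cat}_{i-1}$ amounts to extending $\mu$ along $q_{i-1}$ to a lifting of $D_{i-1}\hookrightarrow D_i$ through $g_{n,i}$. I would recast this as the problem of extending, from $C_{i-1}$ to $D_{i-1}$, a section of the pullback of $g_{n,i}$ along $D_{i-1}\hookrightarrow D_i$; this framing has fiber $F_n(C_X)$ and automatically enforces both the extension of $\mu$ and the correct covering property over $D_i$. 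The resulting obstructions lie in $H^{k+1}(D_{i-1},C_{i-1};\pi_k F_n(C_X))\cong\tilde{H}^{k+1}(\Sigma X;\pi_k F_n(C_X))$, the identification coming from the cofibre sequence $X\xrightarrow{\Delta_{i-1}}C_{i-1}\xrightarrow{q_{i-1}}D_{i-1}$. Since $\Sigma X$ has cells only in degrees $p+1$ and $q+2$, and since $F_n(C_X)$, being an $(n+1)$-fold join of the $(p-2)$-connected space $\Omega C_X$, is $((n+1)p-2)$-connected, the groups $\pi_k F_n(C_X)$ vanish for $k\leq q+1$ precisely when $q\leq (n+1)p-3$, which for $n\geq 2$ is implied by the metastable bound $q\leq 3p-3$. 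All obstructions therefore vanish, and $\lambda^{\cat}_{i-1}$ exists with the compatibility relation holding by construction.

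The hard part will be the converse direction, which is not a formal pullback construction but rests on the obstruction-theoretic vanishing above. It is reassuring that both numerical inequalities used in the proof, $p+q+1\leq p(n+2)-2$ (for lifting through $\mathcal{Q}_{n,i}$) and $q+1\leq (n+1)p-2$ (for killing the obstructions), saturate simultaneously at $n=2$ and $i=4$, each reducing to the metastable hypothesis $q\leq 3(p-1)$; thus both directions of the lemma are governed by exactly the same range.
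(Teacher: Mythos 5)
Your proof is correct and takes essentially the same route as the paper: the forward direction is exactly the paper's pullback-then-lift through the $(p(n+2)-2)$-equivalence $\mathcal{Q}_{n,i}$, and your converse direction simply unpacks into explicit obstruction theory (over the cofibre $\Sigma X$ of $q_{i-1}$, against the $((n+1)p-2)$-connected fiber $F_n(C_X)$) what the paper delegates to \cite[Lemma 4.2]{GGVhopf} via the cofibre sequence $X\to C_{i-1}\to D_{i-1}$. The numerical bounds you extract coincide with the paper's ($p+q+1\leq 4p-2$ and $q+2\leq 3p-1$), both reducing to the metastable hypothesis.
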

\begin{proof}
Since $\dim(C_2)\leq\dim(C_3)=p+q+1\leq4p-2$, and since $\mathcal{Q}_{n,i}$ is a $(4p-2)$-equivalence, the argument in the previous proof applies to prove the first assertion. For the converse, note first that a lifting $\lambda_{i-1}^{\TC}$ corresponds to a section of $g_{n,i-1}^{\TC}$. Likewise, a lifting $\lambda_{i-1}^{\cat}$ corresponds to a section of $g_{n,i-1}$. Furthermore, the compatibility of the sections implies the compatibility of the liftings. The result then follows from~\cite[Lemma 4.2]{GGVhopf} using the cofibre sequence $X\to C_{i-1}\to D_{i-1}$. Namely, a section of $g_{n,i-1}^{\TC}$ yields a compatible section of $g_{n,i-1}$, since $\dim(\Sigma X)=q+2$ and since $g_{n,i-1}$ is (at least) a $(3p-1)$-equivalence ---the latter fact is due to the obvious connectivity of $F_n(C_X)$.
\end{proof}

The following consequence should be compared to the considerations following~(\ref{mild}) in the introduction:
\begin{cor}
$\cat(C_X)\leq\TC(X)$, with equality if $\TC(X)=2$.
\end{cor}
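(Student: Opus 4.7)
The plan is to imitate the argument of Section~2 (the sphere case) by applying the comparison machinery of diagram~(\ref{masdia}) and Lemma~\ref{levantamientoscompatibles} at the top stage of the cone decomposition.

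\medskip
First I would check that $\TC(X)\geq 2$: the hypothesis $H(\alpha)\neq 0$ forces $\cat(X)=2$ by the Berstein-Hilton criterion, and the standard inequality $\cat(X)\leq\TC(X)$ then gives $\TC(X)\geq 2$. Setting $n=\TC(X)$, this provides exactly the hypothesis $n\geq 2$ demanded by Lemma~\ref{levantamientoscompatibles}.

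\medskip
Next I would take a section $s$ of $g_n^{\TC}$ and restrict it to the penultimate stage $C_3$ of the cone decomposition $\ast\subset C_1\subset C_2\subset C_3\subset C_4=X\times X$. This yields a lifting $\lambda_3^{\TC}\colon C_3\to G_{n,4}^{\TC}(X)$ of $C_3\hookrightarrow C_4$; since $s$ itself extends $\lambda_3^{\TC}$ over $C_4$, the top $\TC$-Hopf obstruction $h_3^{\TC}\colon\Sigma_3=S^{2q+1}\to F_n(X)$ must be null-homotopic. Applying Lemma~\ref{levantamientoscompatibles} with $i=4$, I obtain a compatible lifting $\lambda_3^{\cat}\colon D_3\to G_n(C_X)$ of $D_3\hookrightarrow C_X$ whose top $\cat$-Hopf obstruction $h_3^{\cat}\colon S_3=S^{2q+1}\to F_n(C_X)$ satisfies $h_3^{\cat}\circ\tau_3\simeq Q'_n\circ h_3^{\TC}\simeq 0$ by Definition~\ref{comptbs}. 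Since $\tau_3\colon S^{2q+1}\to S^{2q+1}$ is a homotopy equivalence by Proposition~\ref{4x3}, this forces $h_3^{\cat}\simeq 0$, and so $\lambda_3^{\cat}$ extends to a lift of the identity on $C_X$ through $g_n$, that is, to a section of $g_n$. Hence $\cat(C_X)\leq n=\TC(X)$.

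\medskip
The equality under $\TC(X)=2$ then follows immediately, by combining the just-obtained bound $\cat(C_X)\leq 2$ with the lower bound $\cat(C_X)\geq\cat_{D_2}(C_X)=2$ supplied by Proposition~\ref{puntodeinicio}. I do not anticipate any genuine obstacle here: all the heavy lifting lies in Proposition~\ref{4x3} (producing the equivalence $\tau_3$), Lemma~\ref{levantamientoscompatibles} (transporting liftings, and hence obstructions, between the $\TC$ and $\cat$ sides of~(\ref{masdia})), and Proposition~\ref{puntodeinicio} (giving the lower bound). The argument above is simply the metastable two-cell analogue, at the topmost filtration stage, of the sphere computation encoded in Lemma~\ref{remcom} and its corollary.
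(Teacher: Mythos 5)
Your proposal is correct and follows essentially the same route as the paper: it uses Lemma~\ref{levantamientoscompatibles} to transport a lifting from the $\TC$-wall to the $\cat$-wall of~(\ref{masdia}) and then uses that $\tau_3$ (respectively $\tau_2$) is a homotopy equivalence to conclude that vanishing of the $\TC$-Hopf obstruction forces vanishing of the $\cat$-Hopf obstruction, with the equality case for $\TC(X)=2$ supplied by Proposition~\ref{puntodeinicio}. The only (harmless) difference is that you transport the restriction of a global section directly at the top stage $i=4$, whereas the paper phrases the argument in terms of the two top Hopf sets (stages $i=3$ and $i=4$).
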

\begin{proof}
Since $\tau_2$ and $\tau_3$ are homotopy equivalences, Lemma~\ref{levantamientoscompatibles} implies that the triviality of any of the two top Hopf sets on the ``right wall'' of~(\ref{masdia}) follows from the triviality of the corresponding Hopf set on the ``left wall''.
\end{proof}

Proposition~\ref{sighopset} below, which is a partial refinement of Lemma~\ref{levantamientoscompatibles}, follows directly from~\cite[Proposition~4.5]{GGVhopf}:
\begin{prop}\label{sighopset}
The Hopf sets associated to both walls in~(\ref{masdia}) are singletons provided $(n,i)=(2,3)$ or $(n,i)=(3,4)$.
\end{prop}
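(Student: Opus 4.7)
The plan is to reduce Proposition~\ref{sighopset} to a direct application of \cite[Proposition~4.5]{GGVhopf} separately on each of the two ``walls'' of~(\ref{masdia}). That earlier result says precisely that the Hopf set attached to a given cone-attachment stage collapses to a point as soon as the ambient lifting is determined up to fiberwise homotopy, which in turn is controlled by a standard obstruction-theoretic connectivity-versus-dimension comparison.

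The first step will be to pin down the two numerical inputs. Since both $X$ and $C_X$ are $(p-1)$-connected, $\Omega X$ and $\Omega C_X$ are $(p-2)$-connected, and the iterated-join description of $F_n$ makes both $F_n(X)$ and $F_n(C_X)$ into $\bigl((n+1)p-2\bigr)$-connected spaces; hence $g_n^{\TC}$ and $g_n$ are $\bigl((n+1)p-1\bigr)$-equivalences. On the dimensional side, the cone decompositions recorded in Proposition~\ref{4x3}, combined with the standing metastable hypothesis $q\leq 3p-3$ (and the inequality $q>2p-1$ which ensures that the top cell dominates), give
\[
\dim(C_2)=\dim(D_2)=q+1\leq 3p-2,\qquad \dim(C_3)=\dim(D_3)=p+q+1\leq 4p-2.
\]
Thus in the $(n,i)=(2,3)$ case the source dimension lies strictly below the $(3p-1)$-equivalence range of $g_2^{\TC}$ and $g_2$, and in the $(n,i)=(3,4)$ case it lies strictly below the $(4p-1)$-equivalence range of $g_3^{\TC}$ and $g_3$---exactly the input required to force obstruction-theoretic lifting-uniqueness on either wall.

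With these bounds in hand, the final step will be to invoke \cite[Proposition~4.5]{GGVhopf} four times---once for each pair (wall, $(n,i)$)---so as to conclude that every candidate lifting is determined up to homotopy and hence that each Hopf set collapses to a single element. I do not anticipate any substantive obstacle here: the main point to verify is that the hypotheses of~\cite[Proposition~4.5]{GGVhopf} transfer verbatim to both walls simultaneously, and this should be routine, using that the comparison maps $Q'_n$ and $\mathcal{Q}_n$ in~(\ref{masdia}) are built via the fiberwise-join construction and so carry the required naturality automatically. If any difficulty arises, it will be purely bookkeeping in matching the precise formulation of~\cite[Proposition~4.5]{GGVhopf} to the combined $\TC$/$\cat$-context rather than anything homotopy-theoretic.
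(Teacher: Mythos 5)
Your proposal is correct and follows essentially the same route as the paper, which itself offers no argument beyond the remark that the statement ``follows directly from [GGVhopf, Proposition~4.5]''; you simply supply the connectivity-versus-dimension bookkeeping that makes that citation applicable. Your numerical inputs check out: $F_n(X)$ and $F_n(C_X)$ are indeed $\bigl((n+1)p-2\bigr)$-connected, and $\dim(C_2)=\dim(D_2)=q+1\leq 3p-2$ while $\dim(C_3)=\dim(D_3)=p+q+1\leq 4p-2$, so the liftings $\lambda^{\TC}_{i-1}$ and $\lambda^{\cat}_{i-1}$ are unique up to vertical homotopy on both walls, exactly as required.
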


\begin{cor}\label{hopfnivel3}
$$\cat_{D_3}(C_X)=\TC_{C_3}(X)=\begin{cases}
3, & \mbox{if \ $(2+(-1)^p)H(\alpha)\neq0;$}\\
2, & \mbox{otherwise.}
\end{cases}
$$
\end{cor}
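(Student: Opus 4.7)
The plan is to combine Propositions~\ref{puntodeinicio} and~\ref{sighopset} with Lemma~\ref{levantamientoscompatibles}, and then identify a single TC-Hopf invariant explicitly.

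First, I would note that Proposition~\ref{puntodeinicio} gives $\TC_{C_2}(X)=\cat_{D_2}(C_X)=2$, while cone-length considerations place the upper bounds at~$3$. Proposition~\ref{sighopset} with $(n,i)=(2,3)$ then ensures that each of the two Hopf sets in~(\ref{masdia}) is a singleton, say $\{h_2^{\TC}\}$ and $\{h_2^{\cat}\}$; consequently $\TC_{C_3}(X)\leq 2$ if and only if $h_2^{\TC}$ is null-homotopic, and similarly for $\cat_{D_3}(C_X)\leq 2$. So the problem reduces to deciding whether these two obstructions vanish.

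Next, I would choose compatible liftings via Lemma~\ref{levantamientoscompatibles}, so that Definition~\ref{comptbs} yields
\[
h_2^{\cat}\circ\tau_2\simeq Q'_2\circ h_2^{\TC}.
\]
Since $\tau_2\colon S^{p+q}\vee S^{p+q}\to S^{p+q}\vee S^{p+q}$ is a homotopy equivalence by Proposition~\ref{4x3}, and since $Q'_2$ is a $(4p-2)$-equivalence by Lemma~\ref{equirango} with $n=2$ ---hence an isomorphism on $\pi_{p+q}$ because $p+q\leq 4p-3$--- I would conclude that $h_2^{\TC}$ is null-homotopic if and only if $h_2^{\cat}$ is. In particular $\TC_{C_3}(X)=\cat_{D_3}(C_X)$, so it only remains to determine when this common value equals $2$ rather than $3$.

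To pin down that value, I would compute $h_2^{\TC}$ explicitly. Each wedge summand of $\alpha_2\colon S^{p+q}\vee S^{p+q}\to C_2$ is the attaching map of a top cell of $X\times S^p$ or $S^p\times X$, expressible in terms of $\alpha$ and Whitehead products via the cellular structure of the product. Lifting through the iterated fiberwise-join model of $g_2^{\TC}$ and using the $(2p-1)$-equivalence $S^p\hookrightarrow X$ to isolate the Hopf-detecting wedge summand of $F_2(S^p)$, I expect the two symmetric summands of $\alpha_2$ to contribute a factor of $2$ and the cross term, arising from the graded commutativity of the Whitehead product, to contribute a sign $(-1)^p$, each multiplied by $H(\alpha)$. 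This should give $h_2^{\TC}=\pm(2+(-1)^p)H(\alpha)$; under the standing assumption $H(\alpha)\neq 0$, this class is non-zero precisely when $(2+(-1)^p)H(\alpha)\neq 0$, yielding the stated dichotomy. The main obstacle is precisely this last step: the careful bookkeeping of cellular contributions through the fiberwise-join construction of $G_2^{\TC}(X)$. The argument is formally analogous to the sphere identity $h_{[\iota,-\iota]}=\pm(1+(-1)^n)$ from Remark~\ref{elvalorexplicito}, but the presence of a nontrivial attaching map $\alpha$ on top adds an extra combinatorial layer.
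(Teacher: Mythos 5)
Your proposal is correct and follows essentially the same route as the paper: both arguments use Proposition~\ref{sighopset} to reduce to single-valued Hopf invariants in dimension $p+q$, and then compare $h_2^{\TC}$ with $h_2^{\cat}$ via the compatibility relation through $\tau_2$ (an equivalence) and $Q'_2$ (a $(4p-2)$-equivalence, hence an isomorphism on $\pi_{p+q}$ since $p+q\leq 4p-3$), giving $\TC_{C_3}(X)=\cat_{D_3}(C_X)$. The only difference is in the final step, where you sketch an explicit cellular computation of $h_2^{\TC}=\pm(2+(-1)^p)H(\alpha)$; the paper instead simply quotes this as \cite[Theorem~5.5]{GGVhopf}, so your (admittedly incomplete) sketch is re-deriving a result the paper treats as known.
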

\begin{proof}
The (single-valued) Hopf sets in~(\ref{masdia}) for $(n,i)=(2,3)$ lie in (a sum of) $(p+q)$-dimensional homotopy groups. Further, the resulting $\TC$-Hopf \emph{invariant} is mapped into the $\cat$-Hopf \emph{invariant} by the map induced by $Q'_2$, which is a $(4p-2)$-equivalence. This yields the first equality; the second one is a direct consequence of~\cite[Theorem~5.5]{GGVhopf}.
\end{proof}

We are only one lemma away from giving the proof of Theorem~\ref{t1} under the conditions in force in this section, namely when the attaching map $\alpha\colon S^q\to S^p$ lies in the metastable range $2p-1<q\leq3(p-1)$, and has non-trivial Berstein-Hilton-Hopf invariant $H(\alpha)$. The most interesting case holds with $(2+(-1)^p)H(\alpha)\neq 0$, for then Proposition~\ref{sighopset} and Corollary~\ref{hopfnivel3} imply that the relevant (top) $\TC$- and $\cat$-Hopf sets are described, with trivial indeterminacy, by~(\ref{masdia}) with $(n,i)=(3,4)$. Note that, in such a case, the argument in the proof of Corollary~\ref{hopfnivel3} proves Theorem~\ref{t1} if the metastable hypothesis $q\leq3(p-1)$ is replaced by the stronger condition $q\leq\frac{5}{2}p-2$. Lemma~\ref{esqueleto} below allows us to maneuver using only the less restrictive hypothesis.

\begin{lem}\label{esqueleto}
There is a CW structure on $F_3(C_{S^p})$ with $(6p-4)$-skeleton given by
\begin{equation}\label{lusp}
S^{4p-1}\vee \bigvee\limits_{4}\left(S^{5p-2}\cup_h e^{5p-1}\right)
\end{equation}
where $h$ is the classical (integer-represented) Hopf invariant of the Whitehead product $[\iota,-\iota]$, and $\iota$ is the identity on $S^p$. Further, if $F_3(C_{S^ p})\to F_3(C_X)$ is the map induced by the  inclusion of the bottom cell $S^p\hookrightarrow X$, then the composition $$S^{4p-1}\vee \bigvee\limits_{4}(S^{5p-2}\cup_h e^{5p-1}) \hookrightarrow F_3(C_{S^ p})\to F_3(C_X)$$ is a $(3p+q-1)$-equivalence.
\end{lem}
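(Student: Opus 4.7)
My plan is to exploit the identification $F_3(Y)\simeq(\Omega Y)^{*4}\simeq\Sigma^3(\Omega Y)^{\wedge 4}$, which reduces the first assertion to a cellular computation on $\Omega C_{S^p}$ and the second to a connectivity estimate on the induced loop map $\Omega C_{S^p}\to\Omega C_X$. The starting point is to fix a CW model whose $(2p-1)$-skeleton has the form
$$A=S^{p-1}\cup_{[\iota_{p-1},\iota_{p-1}]}e^{2p-2}\cup_h e^{2p-1}.$$
The first two cells come from the James model $\Omega S^p\simeq J(S^{p-1})$: since the bottom inclusion $S^p\hookrightarrow C_{S^p}$ is a $(2p-1)$-equivalence, the loop map $\Omega S^p\to\Omega C_{S^p}$ is a $(2p-2)$-equivalence, and the $(2p-2)$-skeleton of $J(S^{p-1})$ is precisely $J_2(S^{p-1})$. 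The additional $(2p-1)$-cell corresponds to the $(2p)$-cell of $C_{S^p}$, and its attaching map $S^{2p-2}\to A^{(2p-2)}$, after projection onto the top cell $J_2(S^{p-1})/S^{p-1}\simeq S^{2p-2}$, is the degree-$h$ self-map of $S^{2p-2}$. This reflects the standard description of the classical Hopf invariant: the adjoint $S^{2p-2}\to\Omega S^p$ of $[\iota,-\iota]$, composed with the collapse of $\Omega S^p$ onto its bottom-two-cell quotient, has degree $h$.

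\smallskip

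Using this skeleton I enumerate cells of $A^{\wedge 4}$ in total dimension at most $6p-7$. The condition that each smash factor has dimension at least $p-1$ restricts attention to tuples with at most one factor in dimension $\geq 2p-2$, yielding one $(4p-4)$-cell, four $(5p-5)$-cells (one per choice of the special position), and four $(5p-4)$-cells. The attaching map of each $(5p-5)$-cell is $\Sigma^{3(p-1)}[\iota_{p-1},\iota_{p-1}]$, which vanishes because Whitehead products suspend trivially; hence these cells split off as wedge summands. The attaching map of each $(5p-4)$-cell, obtained by smashing the $\Omega C_{S^p}$-attaching of the $(2p-1)$-cell with the identities on the other three $S^{p-1}$'s, reduces after collapse onto the top summand to a degree-$h$ self-map $S^{5p-5}\to S^{5p-5}$. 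Applying $\Sigma^3$ identifies the $(6p-4)$-skeleton of $F_3(C_{S^p})$ with $S^{4p-1}\vee\bigvee_4\bigl(S^{5p-2}\cup_h e^{5p-1}\bigr)$, as claimed.

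\smallskip

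For the second assertion, the inclusion $S^p\hookrightarrow X$ is a $q$-equivalence by cellular approximation, hence by K\"unneth $S^p\times S^p\to X\times X$ is also a $q$-equivalence. Comparing the long exact homology sequences of the cofibrations defining $C_{S^p}$ and $C_X$, and invoking Whitehead on these simply connected spaces, shows that the induced map $C_{S^p}\to C_X$ is itself a $q$-equivalence; looping then gives a $(q-1)$-equivalence $\Omega C_{S^p}\to\Omega C_X$ between $(p-2)$-connected spaces. Iterating the rule that smashing with a $(p-2)$-connected space raises the equivalence degree of a map by $p-1$ yields a $(q+3p-4)$-equivalence on four-fold smash powers, and applying $\Sigma^3$ makes $F_3(C_{S^p})\to F_3(C_X)$ a $(q+3p-1)$-equivalence. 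The $(6p-4)$-skeleton inclusion is a $(6p-4)$-equivalence, which dominates $q+3p-1$ in the metastable range $q\leq 3(p-1)$; composing thus yields the claimed $(3p+q-1)$-equivalence. The main technical delicacy throughout is the identification of the Hopf invariant $h$ as the integer controlling the attaching map of the $(2p-1)$-cell of $\Omega C_{S^p}$.
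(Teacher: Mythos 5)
Your overall strategy is the same as the paper's: model $F_3(C_{S^p})$ as $\Sigma^3(\Omega C_{S^p})^{\wedge 4}$, replace $\Omega C_{S^p}$ by a small skeletal model, take smash powers, and get the second assertion from the $q$-equivalence $C_{S^p}\to C_X$ plus the standard connectivity bookkeeping for loops, smashes and suspensions (that last part of your argument is correct and matches the paper). The difference is that you work with an unsuspended cellular model $A=S^{p-1}\cup_{[\iota_{p-1},\iota_{p-1}]}e^{2p-2}\cup e^{2p-1}$ of the low skeleton of $\Omega C_{S^p}$, whereas the paper applies Gilbert's theorem to get a $(3p-2)$-equivalence $\mathrm{Cone}(\Sigma[\iota,-\iota]')\to\Sigma\Omega C_{S^p}$ and then suspends once more before taking smash powers.

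There is a genuine gap at the step you yourself flag as the ``main technical delicacy''. To conclude that the $(6p-4)$-skeleton is the \emph{wedge} $S^{4p-1}\vee\bigvee_4(S^{5p-2}\cup_h e^{5p-1})$ --- in particular that the bottom sphere $S^{4p-1}$ splits off, which is exactly what is used later in the proof of Theorem~\ref{t1} --- you need the attaching map $\Sigma^{3(p-1)+3}g\colon S^{5p-2}\to S^{4p-1}\vee S^{5p-2}$ of each $(5p-1)$-cell to have \emph{trivial} component on $S^{4p-1}$, not just degree $h$ on $S^{5p-2}$. That component lives in $\pi_{5p-2}(S^{4p-1})\cong\pi^s_{p-1}$, which is generally nonzero, and it is invisible to the homology/cellular-boundary considerations you use; your proposal only pins down the projection onto the top summand. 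The paper settles precisely this point: by~(\ref{giladj}) applied to $\alpha=[\iota,-\iota]$ one has $\Sigma[\iota,-\iota]'=\kappa\circ[\iota,-\iota]+j_2\circ H_0([\iota,-\iota])$, and the first term dies after one more suspension because Whitehead products suspend trivially; hence the attaching map becomes exactly $h$ times the inclusion of the top summand, with no bottom component. Your argument can be repaired by inserting this observation (you invoke the triviality of suspended Whitehead products for the $(5p-5)$-cells but not here, where it is the crux). Two further points you should make explicit: (i) the identification of the attaching map of the $(2p-1)$-cell of $\Omega C_{S^p}$ with a compression of the adjoint $[\iota,-\iota]'$ is itself the content of Gilbert's theorem (or Ganea's work on loops of cofibers), not a consequence of the homology count; and (ii) passing from $A^{\wedge 4}$ to $(\Omega C_{S^p})^{\wedge 4}$ requires noting that $\Omega C_{S^p}$ has no cells in dimensions $2p$ through $3p-4$, so that the relative cells of the pair start only in dimension $6p-6$; without this the skeletal inclusion only gives a $(5p-3)$-equivalence, which does not reach dimension $6p-4$.
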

\begin{proof}
The bottom cell inclusion $S^p\hookrightarrow X$ induces a $q$-equivalence $C_{S^ p}\to C_X$ (because both $S^ p \hookrightarrow X$ and $S^ p\times S^ p\hookrightarrow X\times X$ are $q$-equivalences) which, as in the considerations following~(\ref{llh3}), yields a $(3p+q-1)$-equivalence $F_3(C_{S^ p})\to  F_3(C_X)$. Thus, it remains to show the first assertion of the lemma.

\smallskip
Let $\iota$ be the identity on $S^p$. Applying~\cite[Proposition 4.3]{Gilbert} to the cofiber sequence
$$\xymatrix{
S^{2p-1} \ar[r]^{\;\;[\iota,-\iota]} & S^p \ar[r] & C_{S^p}
}$$
(note that Gilbert's hypothesis that $S^p$ be 2-connected holds in the metastable range $2p-1<q\leq3(p-1)$), we get a $(3p-2)$-equivalence
$$\mathrm{Cone}(\Sigma [\iota,-\iota]') \to \Sigma\Omega C_{S^ p}$$
where $[\iota,-\iota]'$ is the adjoint of $[\iota,-\iota]$. Suspending once and using~(\ref{giladj}), we get a $(3p-1)$-equivalence
$$\rho\colon \mathrm{Cone}(\Sigma(j_2\circ H_0([\iota,-\iota]))) \to \Sigma^2\Omega C_{S^ p},$$
because a Whitehead product has trivial suspension. Note that the domain of $\rho$ is
$$S^{p+1}\vee \left(S^ {2p}\cup_he^ {2p+1}\right)\vee S^{3p-1}\vee S^{4p-2}\vee S^{5p-3}\vee\cdots.$$
In particular, the restriction of $\rho$ to the $(3p-2)$-skeleton of its domain is a $(3p-2)$-equivalence
$$
\rho_1\colon \Sigma^2 L\to\Sigma^2\Omega C_{S^p},
$$
where $L=S^{p-1}\vee M^{2p-1}_h$, and $M^{2p-1}_h$ stands for the $h$-torsion Moore space of dimension $2p-1$. The desired conclusion is now a standard exercise in homotopy theory, and we just sketch the details. Homology calculations show that both $1_L\wedge\rho_1$ and $\rho_1\wedge1_{\Omega C_{S^p}}$ are $(4p-3)$-equivalences, which yields a $(4p-3)$-equivalence $$\rho_2\colon\Sigma^2 L^{\wedge2}\to\Sigma^2(\Omega C_{S^p})^{\wedge2}.$$ The process repeats two more times to yield a $(6p-5)$-equivalence
$$\rho_4\colon\Sigma^2 L^{\wedge4}\to\Sigma^2(\Omega C_{S^p})^{\wedge4}.$$ The conclusion follows by observing that the $(6p-4)$-skeleton of the domain of the $(6p-4)$-equivalence $\Sigma\rho_4\colon\Sigma^3L^{\wedge4}\to F_3(C_{S^p})$ is the space described in~(\ref{lusp}).
\end{proof}

\begin{proof}[Proof of Theorem~\ref{t1}] (Assuming $(2+(-1)^p)H(\alpha)\neq 0$, $2p-1<q\leq3(p-1)$, and $2\leq p$.)
We have noted that the Hopf sets under consideration are single valued, and correspond to the compatible maps $h_3^{\TC}$ and $h_3^{\cat}$ in~(\ref{masdia}) with $(n,i)=(3,4)$.

\smallskip
The homotopy class $h_3^{\TC}$ is well understood in terms of $H_0(\alpha)\circledast H_0(\alpha)$, the join-square of the Hopf invariant $H_0(\alpha)$: As explained at the beginning of the section, $H(\alpha)$ can be thought of as a map $H_0(\alpha)\colon S^q\to S^{2p-1}$ and, in these terms, $h_3^{\TC}$ is the composition
\begin{equation}\label{llh3}\xymatrix{
S^{2q+1} \ar[rrrr]^{2(2+(-1)^p)\cdot H_0(\alpha)\circledast H_0(\alpha)} &&&& S^{4p-1} \ar@{^{(}->}[r] & F_3(S^p)\ar[r] & F_3(X),
}\end{equation}
where the middle map is the inclusion of the bottom cell in $F_3(S^p)$, and the map on the right of~(\ref{llh3}) is induced by the inclusion of the bottom cell in $X$ ---c.f.~\cite[Corollary~4.13, Theorem~5.4, and their proofs]{GGVhopf}. Note that the composition of the last two maps in~(\ref{llh3}) yields the inclusion of the bottom cell in $F_3(X)$. In fact, since the map on the right of~(\ref{llh3}) is a $(3p+q-1)$-equivalence, and since the bottom cell in $F_3(S^p)$ is well known to split off as a wedge summand, $h_3^{\TC}$ can simply be thought of as being given by the first map in~(\ref{llh3}).

\smallskip
Now recall from Lemma~\ref{equirango} that $Q'_3$ is a $(5p-2)$-equivalence, so it has degree~$\pm1$ on the bottom cell. Since $\tau_3$ is a homotopy equivalence, we see from~(\ref{masdia}) that $h_3^{\cat}$ is given up to a sign by the composition
$$\xymatrix{
S^{2q+1} \ar[rrrr]^{2(2+(-1)^p)\cdot H_0(\alpha)\circledast H_0(\alpha)} &&&& S^{4p-1} \ar@{^{(}->}[r] & F_3(C_X)
}$$
where, once again, the latter map is inclusion of the bottom cell. The result follows since Lemma~\ref{esqueleto} allows us to identify $h_3^{\cat}$ with the first map in~(\ref{llh3}).
\end{proof}

Our methods also yield:

\begin{cor}
The following conditions are equivalent:
\begin{itemize}
\item $\TC(X)=4$.
\item $\cat(C_X)=4$.
\item $2(2+(-1)^p)\cdot H_0(\alpha)\circledast H_0(\alpha)\neq0$.
\end{itemize}
\end{cor}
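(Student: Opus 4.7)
The plan is to apply the 3D-diagram~(\ref{masdia}) at the top level $(n,i)=(3,4)$, where Proposition~\ref{sighopset} already guarantees that both relevant Hopf sets are singletons $\{h_3^{\TC}\}$ and $\{h_3^{\cat}\}$. Since $\TC(X)\leq 4$ (because $X\times X$ admits the four-step cone decomposition of~(\ref{lasetapasC})) and $\cat(C_X)\leq\TC(X)$ (the corollary following Lemma~\ref{levantamientoscompatibles}), the two equalities $\TC(X)=4$ and $\cat(C_X)=4$ translate, respectively, into the essentiality of $h_3^{\TC}$ and of $h_3^{\cat}$.

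Next I would show that these two Hopf invariants are trivial simultaneously. Invoking Lemma~\ref{levantamientoscompatibles} at $i=4$ (legitimate since $\tau_3$ is a homotopy equivalence by Proposition~\ref{4x3}), one obtains a compatible pair of liftings $\lambda_3^{\TC}$, $\lambda_3^{\cat}$ and hence, per Definition~\ref{comptbs}, the relation $h_3^{\cat}\circ\tau_3\simeq Q'_3\circ h_3^{\TC}$. With $\tau_3$ a homotopy equivalence, essentiality of $h_3^{\cat}$ forces essentiality of $Q'_3\circ h_3^{\TC}$, hence of $h_3^{\TC}$; the reverse direction is symmetric, using the second half of Lemma~\ref{levantamientoscompatibles} to promote a hypothetical trivialization of $h_3^{\TC}$ into a compatible section on the $\cat$-side. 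This settles the equivalence of the first two bullets.

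Finally I would match $h_3^{\TC}$ to the element in the third bullet by reusing the computation carried out in the proof of Theorem~\ref{t1}: $h_3^{\TC}$ is, up to sign, the composition $S^{2q+1}\to S^{4p-1}\hookrightarrow F_3(S^p)\to F_3(X)$ of $2(2+(-1)^p)\cdot H_0(\alpha)\circledast H_0(\alpha)$ with the (wedge-summand) bottom-cell inclusion followed by the $(3p+q-1)$-equivalence. Because the metastable hypothesis $q\leq 3p-3$ forces $2q+1\leq 3p+q-1$, this composite is essential precisely when $2(2+(-1)^p)\cdot H_0(\alpha)\circledast H_0(\alpha)\neq 0$. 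The main point to watch---and essentially the only spot where something could conceivably fail---is that every equivalence invoked ($Q'_3$ from Lemma~\ref{equirango}, the $(3p+q-1)$-equivalence from Lemma~\ref{esqueleto}, and the splitting of the bottom cell of $F_3(S^p)$) has connectivity greater than $2q+1$; this holds automatically in the metastable range, so no new homotopical input beyond what has already been built for Theorem~\ref{t1} is required.
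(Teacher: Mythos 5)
Your overall route is the paper's own (the corollary is stated with no separate proof precisely because it is meant to be read off from the proof of Theorem~\ref{t1} in the case $(2+(-1)^p)H(\alpha)\neq0$): the two top Hopf sets at $(n,i)=(3,4)$ are singletons $\{h_3^{\TC}\}$ and $\{h_3^{\cat}\}$, the equalities $\TC(X)=4$ and $\cat(C_X)=4$ are equivalent to their respective non-vanishing, and both invariants get identified with $2(2+(-1)^p)\cdot H_0(\alpha)\circledast H_0(\alpha)$. However, your ``step 2'' does not establish the equivalence of the first two bullets: the argument via $h_3^{\cat}\circ\tau_3\simeq Q'_3\circ h_3^{\TC}$ and the argument via promoting a trivialization of $h_3^{\TC}$ to a section on the $\cat$-side both prove the \emph{same} implication, namely $h_3^{\TC}=0\Rightarrow h_3^{\cat}=0$ (i.e.\ $\cat(C_X)\leq\TC(X)$, already known). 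The hard implication $\TC(X)=4\Rightarrow\cat(C_X)=4$ is only obtained through the explicit identification of $h_3^{\cat}$ in your step 3.

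More seriously, the ``main point to watch'' at the end is false as stated: $Q'_3$ is a $(5p-2)$-equivalence, and $5p-2>2q+1$ fails whenever $q>\frac{5}{2}p-2$, which covers the upper part of the metastable range (for $(p,q)=(14,6)$ read as $(q,p)=(14,6)$ one has $5p-2=28<29=2q+1$). This is exactly the obstruction the paper flags just before Lemma~\ref{esqueleto}; if $Q'_3$ really were a $(2q+2)$-equivalence, that lemma would be unnecessary. The correct use of $Q'_3$ is only that it has degree $\pm1$ on the bottom cell $S^{4p-1}$ (which needs $4p-1<5p-2$), so that $h_3^{\cat}$ becomes, up to sign, $f=2(2+(-1)^p)\cdot H_0(\alpha)\circledast H_0(\alpha)$ followed by the bottom-cell inclusion into $F_3(C_X)$; the injectivity on $\pi_{2q+1}$ needed to conclude $h_3^{\cat}\neq0$ from $f\neq0$ comes from Lemma~\ref{esqueleto}, where $S^{4p-1}$ splits off the skeleton~(\ref{lusp}) and the map to $F_3(C_X)$ is a $(3p+q-1)$-equivalence, with $2q+1<3p+q-1$ being exactly the metastable bound $q\leq3p-3$. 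Since you do invoke Lemma~\ref{esqueleto} and the computation in the proof of Theorem~\ref{t1}, the argument is salvageable verbatim, but the justification you give for why the equivalences suffice would break down precisely where that lemma is indispensable.
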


\begin{proof}[Proof of Theorem~\ref{t1}] (Assuming $H(\alpha){\,\neq\,}0$, $2p{\,-\,}1{\,<\,}q\leq3(p-1)$, and $2\leq p$.) For the first assertion of the theorem, we can assume that $p$ is odd or that $3H(\alpha)\neq0$. In either case, the non-vanishing of $H(\alpha)$ implies the non-vanishing of $(2+(-1)^p)H(\alpha)$, and the previous proof applies.

\smallskip
For the second assertion of the theorem, just note that the Hopf-set approach also shows that the only instance where the equality $\TC(X)=\cat(C_X)$ can fail (having actually $\TC(X)=\cat(C_X)+1$) would hold with $\TC(X)=3$ due to a vanishing third $\TC$-Hopf invariant $(2+(-1)^p)H_0(\alpha)$, followed by a non-trivial fourth Hopf set (in dimension 2).
\end{proof}

\section{Non-Hopf-sets methods}

As a consequence of \cite[Theorem 24]{GLweak}, we have:
\begin{lem}\label{markweigths}
The zero-divisors cup-length of $X$ (with any ring of coefficients) is a lower bound for $\cat(C_X)$.
\end{lem}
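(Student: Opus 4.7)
The plan is to combine a lifting argument through the cofiber sequence with the classical cup-length lower bound for the LS-category, applied to $C_X$.

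Concretely, I would work with the long exact sequence in reduced cohomology associated to the cofiber sequence $X\xrightarrow{\Delta}X\times X\xrightarrow{q}C_X$, which gives exactness of
$$
\tilde H^*(C_X;R)\xrightarrow{q^*}\tilde H^*(X\times X;R)\xrightarrow{\Delta^*}\tilde H^*(X;R).
$$
By definition, the zero-divisors are the classes in $\ker\Delta^*$, so each zero-divisor lifts to some class in $\tilde H^*(C_X;R)$. Given zero-divisors $z_1,\dots,z_k\in\tilde H^*(X\times X;R)$ with $z_1\cdots z_k\neq 0$, I would pick lifts $\bar z_i\in\tilde H^*(C_X;R)$. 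Since $q^*$ is a ring homomorphism, $q^*(\bar z_1\cdots\bar z_k)=z_1\cdots z_k\neq 0$, whence the cup product $\bar z_1\cdots\bar z_k$ is itself non-zero in $H^*(C_X;R)$. Thus $\text{cup-length}(C_X;R)\geq k$, and the classical cup-length inequality $\cat(C_X)\geq\text{cup-length}(C_X;R)$ yields $\cat(C_X)\geq k$. Taking $k=\zcl(X)$ completes the argument.

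This direct route does not really require the full strength of \cite[Theorem 24]{GLweak}; that theorem presumably delivers a sharper statement (in terms of weak category or category weights) for which the cup-length version is the easy special case. If one prefers to invoke it as stated in the lemma, the only task is to check that its hypotheses are met for the cofiber sequence of the diagonal $\Delta$, which is immediate since $q$ is an honest continuous map. I do not foresee any substantive obstacle; the only mildly subtle point is working in reduced cohomology so that the exact sequence identifies $\ker\Delta^*$ with the image of $q^*$ in positive degrees, allowing the lifts $\bar z_i$ to be chosen.
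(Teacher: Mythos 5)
Your proof is correct and follows essentially the same route as the paper's: both identify the zero-divisors $\ker\Delta^*$ with classes coming from $C_X$ via the cohomology exact sequence of the cofiber sequence of the diagonal, and then conclude with the classical cup-length lower bound for LS-category. The paper additionally uses the retraction $X\times X\to X$ to split that exact sequence, identifying $\widetilde H^*(C_X)$ with the full ideal of zero-divisors, but for the stated lower bound only the surjectivity of $q^*$ onto $\ker\Delta^*$ is needed, which is exactly what you use.
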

For the convenience of a forthcoming proof we give here a direct proof of this lemma:
\begin{proof}
The projection onto the first axis $X\times X\to X$ is a retraction for the diagonal $\Delta\colon X\to X\times X$. Thus the exact cohomology sequence of the pair $(X \times X, X)$ splits, and the reduced cohomology of $C_X$ is given by
$$\widetilde H^*(C_X) = H^*(X \times X, X) = \ker (H^*(X \times X) \stackrel{\Delta^*}\to H^*(X)),$$ which is the ideal of zero-divisors in $H^*(X \times X)$. The result follows.
\end{proof}

Since the condition $H(\alpha)\neq0$ can hold only with $q\geq2p-1$ (and $p\geq1$), the only instances of Theorem~\ref{t1} with $H(\alpha)\neq0$ that have not yet been proved are those with $q=2p-1$ and $p\geq1$.

\begin{proof}[Proof of Theorem~\ref{t1}] (Assuming $H(\alpha)\neq0$, $q=2p-1$, and $p\geq2$.) It has been shown in~\cite[Theorem~5.2]{GGVhopf} that $\TC(X)=\zcl_{\mathbb{Z}}(X)=4$. Further $\TC(X)\geq\cat(C_X)$ in view of~\cite[Theorem~10]{GCVcofibre}. The result then follows from Lemma~\ref{markweigths}.
\end{proof}

\begin{proof}[Proof of Theorem~\ref{t1}] (Assuming $H(\alpha)\neq0$ and $p=q=1$, where $H(\alpha)$ is to be interpreted as $\deg(\alpha)$.) The previous argument works (using $\mathbb{Z}_2$ coefficients) when $\deg(\alpha)=\pm2$, whereas the situation is elementary for $\deg(\alpha)=\pm1$. Lastly, as detailed below, the argument in~\cite[Theorem~5.1]{GGVhopf} proving
\begin{equation}\label{reciclado}
\mbox{$\TC(X)=4\;$ for $\;|\deg(\alpha)|>2$}
\end{equation}
is easily extended to show $\TC(X)=\cat(C_X)=4$.

\smallskip
Let $k$ stand for the absolute value of $\deg(\alpha)$, and consider  generators $x_i$ of $H^i(X;\mathbb{Z}_k)=\mathbb{Z}_k$, for $i=1,2$, connected by the mod-$k$ Bockstein operator $\beta_k$. Then the corresponding zero-divisors $\bar{x}_i=1\times x_i-x_i\times1\in H^i(X\times X;\mathbb{Z}_k)$ are connected by $\beta_k$ too. As observed in the proof of Lemma~\ref{markweigths}, the latter cohomology classes can be thought of as lying in $H^*(C_X;\mathbb{Z}_k)$, where they have to be connected by $\beta_k$. Then~\cite[Theorem 3.12]{MR1317569} implies that the class $\bar{x}_2\in H^2(C_X;\mathbb{Z}_k)$ has category weight at least 2 and, since the square of the latter class is obviously non-zero (recall $k>2$), we obtain $\cat(C_X)\geq4$. The result now follows from~(\ref{reciclado}) and~\cite[Theorem~10]{GCVcofibre}.
\end{proof}

\begin{proof}[Proof of Theorem~\ref{t1}] (Assuming $2\leq p\leq q\leq3(p-1)$ and $H(\alpha)=0$.)
It is well known that $\TC(X)=\zcl_{\mathbb{Q}}(X)=2$ (see~\cite{MR3117387} and the initial considerations in Section~5 of~\cite{GGVhopf}). The result follows again from~\cite[Theorem~10]{GCVcofibre} and~Lemma~\ref{markweigths}.
\end{proof}

\begin{proof}[Proof of Theorem~\ref{t1}] (Assuming $p=q=1$ and $H(\alpha)=0$.)
 Here $X=S^1\vee S^2$, $\TC(X)=\zcl(X)=2$, and $\cat(C_X)\geq2$. Since $X$ is a suspension, Remark \ref{cofibresuspension} gives $\cat(C_X)\leq 2$ and completes the proof.
\end{proof}

\begin{rem} In fact, by combining Remark \ref{cofibresuspension} and Lemma \ref{markweigths} with the methods and results of \cite{MR3117387}, it is not difficult to show that $\TC(X)=\cat(C_X)$ whenever $X$ is a path-connected suspension of finite type.
\end{rem}

\bibliographystyle{plain}
\bibliography{bib}

\end{document}